\documentclass[12pt,reqno]{amsart}

\usepackage{amsbsy}
\usepackage{amscd}
\usepackage{amsfonts}
\usepackage{amsmath}
\usepackage{amssymb}
\usepackage{amsthm}
\usepackage{amsxtra}

\usepackage[utf8]{inputenc}
\usepackage[margin=1in]{geometry}

\usepackage{datetime}
\usepackage{tikz-cd}
\usepackage{tikz}
\usepackage{todonotes}
\usepackage{verbatim}

\theoremstyle{definition}

\newtheorem{thm}{Theorem}[section]
\newtheorem*{thm*}{Theorem}

\newtheorem*{conj*}{Conjecture}

\newtheorem*{conv*}{Convention}
\newtheorem{cor}[thm]{Corollary}
\newtheorem*{cor*}{Corollary}

\newtheorem*{defn*}{Definition}

\newtheorem*{exa*}{Example}
\newtheorem{exc}[thm]{Exercise}
\newtheorem*{exc*}{Exercise}

\newtheorem*{fact*}{Fact}
\newtheorem{lem}[thm]{Lemma}
\newtheorem*{lem*}{Lemma}

\newtheorem*{prob*}{Problem}

\newtheorem*{prop*}{Proposition}

\newtheorem*{ques*}{Question}
\newtheorem{rmk}[thm]{Remark}
\newtheorem*{rmk*}{Remark}

\newcommand{\mc}{\mathcal}

\newcommand{\C}{\mathbb{C}}

\newcommand{\Q}{\mathbb{Q}}

\newcommand{\R}{\mathbb{R}}
\newcommand{\Z}{\mathbb{Z}}

\newcommand{\cC}{\mathcal{C}}

\newcommand{\cM}{\mathcal{M}}

\newcommand{\cU}{\mathcal{U}}
\newcommand{\cV}{\mathcal{V}}

\newcommand{\al}{\alpha}
\newcommand{\bet}{\beta}

\newcommand{\gam}{\gamma}

\newcommand{\del}{\delta}
\newcommand{\eps}{\varepsilon}

\newcommand{\kap}{\kappa}
\newcommand{\Lam}{\Lambda}

\newcommand{\Om}{\Omega}
\newcommand{\om}{\omega}

\newcommand{\ra}{\rightarrow}

\newcommand{\ol}{\overline}
\newcommand{\pr}{\prime}

\newcommand{\wt}{\widetilde}

\newcommand{\lr}[1]{\langle #1 \rangle}

\newcommand{\sm}{\setminus}

\DeclareMathOperator{\GL}{GL}

\DeclareMathOperator{\Sp}{Sp}

\DeclareMathOperator{\Area}{Area}

\DeclareMathOperator{\im}{Im}

\DeclareMathOperator{\lcm}{lcm}

\DeclareMathOperator{\Per}{Per}

\DeclareMathOperator{\rank}{rank}

\DeclareMathOperator{\Span}{span}

\DeclareMathOperator*{\vol}{vol}

\newcommand{\be}{\begin{equation*}}
\newcommand{\ee}{\end{equation*}}
\newcommand{\bex}{\begin{exc}}
\newcommand{\eex}{\end{exc}}
\newcommand{\bpf}{\begin{proof}}
\newcommand{\epf}{\end{proof}}

\settimeformat{hhmmsstime}

\title{Isoperiodic forms and invariant subvarieties of connected strata}
\author[Karl Winsor]{Karl Winsor}
\date{May 31, 2026}

\begin{document}

\begin{abstract}
Strata of holomorphic $1$-forms have an absolute period foliation given by varying $1$-forms while keeping their integrals along closed loops fixed. In this paper, we classify the leaf closures of this foliation when the stratum is connected, outside of a subvariety of high codimension.
\end{abstract}

\maketitle


\section{Introduction}

Let $\Om\cM_g(\kap)$ be a stratum of holomorphic $1$-forms with at least $2$ distinct zeros. The stratum $\Om\cM_g(\kap)$ has an absolute period foliation obtained by varying holomorphic $1$-forms while keeping their integrals along closed loops fixed. The purpose of this paper is to contribute to the study of the dynamics of this foliation, and to make progress toward an analogue of Ratner's orbit closure theorem \cite{Rat:ICM} for the closures of absolute period leaves in strata.

Previously, Calsamiglia-Deroin-Francaviglia \cite{CDF:transfer} established such an analogue in the principal stratum $\Om\cM_g(1,\dots,1)$. They obtained their leaf closure theorem by showing that holomorphic $1$-forms in this stratum with the same absolute periods lie on the same absolute period leaf. Using this topological result, one can transfer dynamical results for the action of $\Sp(2g,\Z)$ on the space of homomorphisms $H_1(S_g;\Z) \ra \C$ due to Kapovich \cite{Kap:periods} to dynamical results for leaf closures of the absolute period foliation.

By the Riemann bilinear relations, a holomorphic $1$-form $(X,\om)$ has positive area in the natural metric determined by $\om$, and the group of absolute periods $\Per(\om)$ contains a lattice in $\C$. Let $\Lam(\om)$ denote the closure of $\Per(\om)$ in $\C$. Fix $a > 0$ and let $\Lam \subset \C$ be a closed additive subgroup that contains a lattice in $\C$. For $\cC$ a connected component of $\Om\cM_g(\kap)$, define $\cC_a$ to be the space of $1$-forms in $\cC$ with area $a$, and define
\be
\cC_a^\Lam = \{(X,\om) \in \cC_a : \Lam(\om) \subset \Lam \text{ and meets every component of } \Lam \} .
\ee
The main result of this paper is the following.

\begin{thm} \label{thm:leafclosures}
Fix $g \geq 3$. Let $\cC$ be either a connected stratum or the nonhyperelliptic component of $\Om\cM_g(g-1,g-1)$ with $g - 1$ odd. There is an algebraic subvariety $\cV \subsetneq \cC$ of positive codimension with the following property. For any $(X,\om) \in \cC \sm \cV$ with area $a > 0$, the closure in $\cC$ of the absolute period leaf through $(X,\om)$ is a connected component of $\cC_a^{\Lam(\om)}$.
\end{thm}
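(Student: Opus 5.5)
The plan follows the transfer strategy of Calsamiglia--Deroin--Francaviglia, adapted to connected strata with a bad locus removed. There are three steps: (i) a leaf-connectivity statement; (ii) dynamics of $\Sp(2g,\Z)$ on period homomorphisms; (iii) an invariant-subvariety argument that produces $\cV$ and controls the boundary behaviour.

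\textbf{Reduction to periods.} Consider the absolute period map $\Per \colon \cC \to H^1(S_g;\C)/\Sp(2g,\Z)$, or more precisely its lift to the Torelli-marked cover. Its fibers are unions of absolute period leaves. The first goal is a \emph{leaf-connectivity} statement: outside an algebraic subvariety $\cV$, any two forms in $\cC\sm\cV$ with the same marked absolute periods lie on the same leaf. I would prove this by degeneration and surgery, following CDF. Using real-rel and isoperiodic deformations, move a form until two zeros collide, or until a form in a smaller stratum appears with a splitting of a zero. Then induct on the number of zeros, using the known principal-stratum result as the base case and connectivity of $\cC$ to glue the pieces. The subvariety $\cV$ is designed to absorb the forms where this breaks down. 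These are the proper invariant subvarieties of $\cC$ that are contained in the fibers of the absolute period map over the exceptional orbits: hyperelliptic loci, covering constructions, and forms whose absolute periods satisfy extra relations forcing lower-rank $\Lam(\om)$. By Eskin--Mirzakhani--Mohammadi and Filip, there are only countably many such loci. The positive-codimension claim comes from showing that only finitely many are relevant, namely those of small rank in the sense of Wright. This uses the cylinder deformation and rank arguments of Apisa--Wright type.

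\textbf{Dynamics on periods.} Given leaf-connectivity, the leaf closure of $(X,\om)$ is the preimage under $\Per$ of the closure of the $\Sp(2g,\Z)$-orbit of $\Per(\om)$, intersected with a connected component of the level set. Kapovich's theorem, together with its refinement used by CDF, says the following. For a homomorphism $\chi\colon H_1\to\C$ of positive volume with $\overline{\chi(H_1)}=\Lam$, the orbit closure is the set of positive-volume homomorphisms $\chi'$ with the same volume, with $\overline{\chi'(H_1)}\subset \Lam$, and with image meeting every component of $\Lam$. The volume here is $a$, by the Riemann bilinear relations. This matches the definition of $\cC_a^{\Lam}$. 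The component statement then follows because a leaf closure is connected, and because $\Per$ restricted to $\cC_a^{\Lam}\sm\cV$ has connected fibers.

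\textbf{Main obstacle.} The hardest step is leaf-connectivity. In particular, one must show that the leaf closure is not trapped by $\cV$: a leaf starting outside $\cV$ must accumulate on all of the corresponding component, even though fibers through points of $\cV$ may be disconnected. I would first handle this by proving that the period map restricted to $\cC\sm\cV$ is a submersion onto an open set with fibers whose connected components are single leaves. I would then use a codimension count to show that removing $\cV$ does not disconnect the fibers. This requires $\cV$ to have codimension at least $2$ in each fiber, which is where the high-codimension estimate and the hypothesis $g\ge 3$ enter. The nonhyperelliptic component of $\Om\cM_g(g-1,g-1)$ would need a separate check that its hyperelliptic-adjacent boundary behaves like a connected stratum.
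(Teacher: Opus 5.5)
There is a genuine gap. Your whole argument rests on a global transfer principle for $\cC\sm\cV$, namely that forms with the same marked absolute periods lie on one leaf, and your sketch does not establish it. Your induction on the number of zeros would have to pass from the principal stratum (the most zeros) down to $\cC$. But connectivity of isoperiodic fibers in a stratum with more zeros does not descend to a boundary stratum obtained by colliding zeros: the leaf paths joining two small splittings need not stay near the boundary, and there is no mechanism for collapsing them back.

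Your description of $\cV$ is also not workable, for three reasons. First, forms whose $\Lam(\om)$ is discrete or of type $\R+i\Z$ are dense in $\cC$, so no proper subvariety can absorb them. Second, connectivity is a property of an entire noncompact fiber, and nothing in your sketch (in particular no rank or cylinder-deformation argument) shows that the union of bad fibers lies in a proper algebraic subvariety. Third, the codimension count in your last paragraph presupposes that the fibers in $\cC$ are connected, which is exactly what is at issue. The submersion and leaves-as-components claims are automatic from period coordinates and add nothing. Note that the theorem only asserts what leaf closures are, which is weaker than fiber connectivity; the paper never proves, or needs, the latter.

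The paper instead proves a purely local transfer. After reducing to two zeros of odd orders $k_1,k_2$ by splitting zeros, it builds an explicit period chart $\mc{U}(k_1,k_2)$ from antler configurations (two slit tori and $g-2$ bubbled cylinders). Using paths of connected sums and the Dehn-twist criterion of Lemma \ref{lem:sc12}, it shows that the shears and factor mixes generating the abelian groups $U_j$ are realized by paths within a single leaf (Lemma \ref{lem:dehntwistrecurrence}). Lemma \ref{lem:UAhalfspaces} lets one reach any point of a $U_j$-orbit without leaving the chart, so $U_j$-orbit closures of period vectors lie in leaf closures (Corollary \ref{cor:leafclosureU}).

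The elementary $U_A$-orbit-closure bootstrap of Section \ref{sec:Kapovich}, which replaces Kapovich's Ratner-based argument, then gives the following: for $(X,\om)$ in the dense open set $\mc{U}^{\pr\pr}(k_1,k_2)$ with $\Lam(\om)$ nondiscrete, the leaf closure contains an open subset of $\cC_a^{\Lam(\om)}$ (Lemma \ref{lem:leafclosurenbhd}). The existence of dense leaves in $\cC_a^{\C}$ and $\cC_a^{\R+i\Z}$ from \cite{Win:ergodic} upgrades this to density.

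Finally, $\cV$ is never described a priori. It is the complement of the open $\GL^+(2,\R)$-invariant set $\GL^+(2,\R)\cdot\mc{U}^{\pr\pr}(k_1,k_2)$, and \cite{EMM:closures}, \cite{Fil:splitting} make it a finite union of proper invariant subvarieties. This mechanism is the idea your outline is missing: verify the conclusion on a nonempty open set, spread it by $\GL^+(2,\R)$-equivariance, and let orbit-closure rigidity produce $\cV$.
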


The remaining stratum components can also be addressed using the same general approach in this paper. However, there are several additional technical challenges for these stratum components, so for reasons of length this work is deferred to a separate paper.

\begin{rmk} \label{rmk:highcodim}
The subvariety $\mc{V}$ is invariant under both the absolute period foliation of $\mc{C}$ and the $\GL^+(2,\R)$-action on $\mc{C}$. Following Apisa-Wright \cite{AW:highrank}, $\GL^+(2,\R)$-orbit closures in are called {\em invariant subvarieties}. Theorem 1.1 in \cite{AW:highrank} implies that when $\cC$ is a connected stratum, the complex codimension of the invariant subvariety $\cV$ in $\cC$ must be at least $g - 1$.
\end{rmk}

\paragraph{\bf Methods.} Our approach to Theorem \ref{thm:leafclosures} is based on a new elementary proof of Kapovich's orbit closure theorem for the $\Sp(2g,\Z)$-action on homomorphisms $H_1(S_g;\Z) \ra \C$. Our proof focuses on the dynamics of abelian subgroups of $\Sp(2g,\Z)$ generated by shears and factor mixes in a given symplectic basis. This proof can be adapted to study the dynamics of absolute period leaves by, roughly speaking, showing that certain nearby $1$-forms whose absolute periods are related by an element of an abelian subgroup of Dehn twists lie on the same absolute period leaf. At the end of our proof, we rely on results in \cite{Win:ergodic} that show the existence of dense absolute period leaves in the loci $\cC_a^{\cC}$ and $\cC_a^{\R + i\Z}$, and on the theorems of Eskin-Mirzakhani-Mohammadi \cite{EMM:closures} and \cite{Fil:splitting}. \\

\paragraph{\bf Related work.} The dynamics of the absolute period foliation and its subfoliations have been studied in many other papers now, including \cite{CDF:transfer}, \cite{CW:realrel}, \cite{Ham:ergodicity}, \cite{HW:Rel}, \cite{McM:isoperiodic}, \cite{Osp:realrel}, \cite{Win:ergodic}, \cite{Win:dense}, \cite{Win:saturated}, \cite{Ygo:dense}.

The current paper is most directly inspired by the work in \cite{CDF:transfer} and \cite{Win:saturated}. In \cite{CDF:transfer}, a transfer principle is proven allowing one to deduce dynamical results for the absolute period foliation of the principal stratum from dynamical results from \cite{Kap:periods} for the $\Sp(2g,\Z)$-action on the space of positive-volume homomorphisms $H_1(S_g;\Z) \ra \C$. In particular, \cite{CDF:transfer} gives a classification of leaf closures in the principal stratum and shows that the absolute period foliation is ergodic on these closed sets. The approach in \cite{CDF:transfer} is inductive and builds off of an observation in \cite{McM:isoperiodic} in genus $2$ and $3$. In \cite{Win:saturated}, a shorter proof of the leaf closure classification theorem of \cite{CDF:transfer} is obtained by applying the rigidity theorems of \cite{EMM:closures} for $\GL^+(2,\R)$-orbit closures in strata. The current paper provides a step toward extending the methods of \cite{CDF:transfer} and \cite{Win:saturated} to general strata of holomorphic $1$-forms in the hopes of completely classifying absolute period leaf closures in strata.

In \cite{Ham:ergodicity}, a simpler proof of the ergodicity of the absolute period foliation of the principal stratum is given, also using induction on genus and using \cite{McM:isoperiodic} for base cases. Examples of dense absolute period leaves in certain strata are given in \cite{HW:Rel} and \cite{Ygo:dense}. In \cite{Win:dense}, the existence of dense real Rel flow orbits in all stratum components with multiple zeros is proven. In \cite{CW:realrel}, it is shown that real Rel flows are ergodic on all stratum components with multiple zeros. In \cite{Osp:realrel}, the existence of real Rel flow orbits in $\Om\cM_2(1,1)$ that are not divergent and non-recurrent is proven.

Lastly, the dynamics and intrinsic structure of absolute period leaves are also an interesting topic in the setting of strata of meromorphic differentials. We refer to \cite{CD:meromorphic} and \cite{FTZ:meromorphic} for recent examples of work on this topic. \\

\paragraph{\bf Acknowledgements.} During the preparation of this paper, the author was supported by an NSF MSPRF under grant number DMS-2303185.


\section{Revisiting Kapovich's Orbit Closure Theorem} \label{sec:Kapovich}

Let $S_g$ be the connected closed oriented surface of genus $g$. The cohomology group $H^1(S_g;\C)$ is identified with the space of homomorphisms $H_1(S_g;\Z) \ra \C$. We will denote $H_1 = H_1(S_g;\Z)$. The {\em volume} of $p \in H^1$ is defined by
\be
\vol(p) = \sum_{j=1}^g \im(\ol{p(a_j)} p(b_j))
\ee
where $\{a_j,b_j\}_{j=1}^g$ is any symplectic basis for $H_1$. We are mainly interested in the space of positive-volume homomorphisms
\be
V = \{p \in H^1 : \vol(p) > 0\} .
\ee
For $p \in V$, let $\Lam(p)$ be the closure of the image of $p$ in $\C$. When $\vol(p) \neq 0$, $\Lam(p)$ is a closed subgroup of $\C$ containing a lattice, so up to the action of $\GL^+(2,\R)$ there are $3$ possibilities for $\Lam(p)$: $\Z + i\Z$, $\R + i\Z$, and $\C$.

Let $\Sp(H_1)$ denote the group of automorphisms of $H_1$ that preserve the algebraic intersection pairing. The group $\Sp(H_1)$ acts on $V$ by precomposition: for $p \in V$ and $M \in \Sp(H_1)$, we have $M \cdot p = p \circ M^{-1}$. The action of $\Sp(H_1)$ on $V$ preserves both $\vol(p)$ and $\Lam(p)$. Thus, for $a > 0$ and $\Lam \subset \C$ a closed subgroup, we define
\begin{align*}
V_a &= \{p \in V : \vol(p) = a\} \\
V_a^\Lam &= \{p \in V_a : \Lam(p) \subset \Lam \text{ and meets every component of } \Lam \} .
\end{align*}
When $\Lam = \C$, we have $V_a^\Lam = V_a$. When $\Lam \cong \R + i\Z$, a homomorphism $p \in V_a$ is in $V_a^\Lam$ if and only if $p(H_1) \subset \Lam$ and $p(H_1) \not\subset n\Lam$ for all integers $n \geq 2$. When $\Lam \cong \Z + i\Z$, a homomorphism $p \in V_a$ is in $V_a^\Lam$ if and only if $p(H_1) = \Lam$.

The question of which homomorphisms in $V$ arise from holomorphic $1$-forms in genus $g$ was originally answered by Haupt \cite{Hau:periods}. Kapovich rediscovered Haupt's theorem in \cite{Kap:periods} and provided a very different proof based on classifying the $\Sp(H_1)$-orbit closures in $V$, which uses Ratner's orbit closure theorem and other deep theorems from homogeneous dynamics. In this section, we give a new elementary proof of (most of) Kapovich's $\Sp(H_1)$-orbit closure classification, by analyzing the actions of abelian subgroups of $\Sp(H_1)$ on $V$.

\begin{thm} \label{thm:Kapovich} (\cite{Kap:periods}, see Proposition 3.10 in \cite{CDF:transfer}) Suppose $g \geq 3$, and fix $p \in V$. Let $a = \vol(p)$ and $\Lam = \Lam(p)$. The closure of $\Sp(H_1) \cdot p$ in $V$ is given by $V_a^\Lam$.
\end{thm}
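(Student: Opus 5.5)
The inclusion $\overline{\Sp(H_1)\cdot p} \subset V_a^\Lam$ is immediate: $\vol$ is continuous and $\Sp(H_1)$-invariant, $\Lam(M\cdot p)=\Lam(p)$ because $M\cdot p$ has the same image subgroup as $p$, and the defining conditions of $V_a^\Lam$ are closed in $V_a$. (When $\Lam$ is discrete, "$p(H_1)\subset\Lam$" and "$p(H_1)\not\subset n\Lam$" are locally constant conditions.) All the work is in the reverse inclusion. I would organize the argument by the three cases $\Lam\cong \Z+i\Z$, $\R+i\Z$, $\C$, after normalizing by $\GL^+(2,\R)$, which commutes with the $\Sp(H_1)$-action.

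\textbf{Discrete case} $\Lam=\Z+i\Z$. Here $V_a^\Lam$ is discrete, so we must show it is a single orbit. Write $p=(x,y)$ with $x,y\in H^1(S_g;\Z)$. Then $\vol(p)=x\cdot y$, where $\cdot$ is the cup product pairing, and $p(H_1)=\Lam$ means both $x$ and $y$ are primitive. Up to $\Sp$, any primitive $x$ is $a_1^*$. The stabilizer of $a_1^*$ includes the transvections along $a_1^*$ and a copy of $\Sp(2g-2,\Z)$ on the complementary part. Using these, one reduces $y$ to the normal form $y=a\,b_1^* + a_2^*$ (or $y=b_1^*$ when $a=1$), as in the classical classification of pairs in a symplectic lattice. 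This step is elementary Euclidean-algorithm bookkeeping.

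\textbf{Non-discrete cases.} The plan, following the Methods paragraph, is to use abelian subgroups. Fix a symplectic basis and consider the shears $a_j\mapsto a_j$, $b_j\mapsto b_j+n a_j$, together with the "factor mixes" $a_j\mapsto a_j+m a_k$, $b_k\mapsto b_k - m b_j$. These generate abelian groups acting on the coordinates $(p(a_j),p(b_j))$ by adding integer combinations of the periods. The key steps are as follows.

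\begin{enumerate}
\item Show that, after applying some $M\in\Sp(H_1)$, the vectors $p(a_1),\dots,p(a_g)$ generate a dense subgroup of a line (respectively of $\C$), using $g\ge 3$ to move irrationality between handles.
\item Using the closure of the orbit of the abelian shear group, which is a torus or linear subspace by Kronecker's theorem, show that the orbit closure contains every $q$ agreeing with $p$ on $a_1$ and $a_2$ and having prescribed values elsewhere, subject only to the volume constraint.
\item Show that $V_a^\Lam$ is connected via such moves, so that any target $q$ can be reached: reduce both $p$ and $q$ to a common normal form on the first handle, then adjust the other handles.
\end{enumerate}

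I expect step 1 to be the main obstacle, together with the $\R+i\Z$ case in step 3. For step 1, the difficulty is producing irrationality in a controlled direction while preserving volume; I would first try an inductive Euclidean-type reduction on the real parts of $p(a_j)$ using factor mixes. In the $\R+i\Z$ case one must additionally respect the integrality of the imaginary parts. There I would first reduce $\im p$ as an integral class via the discrete-case argument, and then apply Kronecker's theorem only to the real parts, using the stabilizer of $\im p$.
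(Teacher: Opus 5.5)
The gap is in your steps 1--2, which is where essentially all of the work lies, and you have located the obstacle in the wrong place. First, the mixes you wrote ($a_j\mapsto a_j+ma_k$, $b_k\mapsto b_k-mb_j$) move the $A$-periods and do not commute with the shears: with $s_j\colon b_j\mapsto b_j+a_j$ one gets $s_jf_{jk}(b_k)=b_k-b_j-a_j\neq f_{jk}s_j(b_k)$. So they do not give an abelian group of translations to which Kronecker's theorem applies. The right group is $U_A$, the pointwise stabilizer of the Lagrangian $A=\Span_\Z\{a_1,\dots,a_g\}$. It is generated by $b_j\mapsto b_j-a_j$ and by $b_j\mapsto b_j-a_k$, $b_k\mapsto b_k-a_j$. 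With $z=(p(a_j))_j$ fixed, it acts on the $B$-periods $w$ by the translations $w\mapsto w+z_je_j$ and $w\mapsto w+z_ke_j+z_je_k$ inside a $(2g-1)$-dimensional affine slice.

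Second, and more seriously, the criterion you implicitly rely on is false: namely, that once $p(A)$ is dense, Kronecker frees the other periods up to the volume constraint. Take $g=3$ and write $z_3=xz_1+yz_2$. Five of the six translation vectors form a basis of the ($5$-dimensional) direction space of the slice, and the sixth is $v_6=xv_1+\frac{y^2}{x}v_2+\frac{1}{x}v_3+yv_4-\frac{y}{x}v_5$. Hence the orbit closure has dimension $d(z)=-1+\dim_\Q\Span_\Q\lr{1,x,y,x^2,xy,y^2}$. For $z=(1,i,\theta+i\theta^2)$ with $\theta=2^{1/3}$, the group $p(A)$ is dense in $\C$, because $1,\theta,\theta^2$ are $\Q$-independent. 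Yet $d(z)=2$, so the $U_A$-orbit closure is only $2$-dimensional in a $5$-dimensional slice. Thus step 2 does not follow from step 1. Step 1 is in fact the easy part: a symplectic basis with pairwise nonparallel values (obtained with powers of a transvection), plus a commensurability argument, yields an isotropic triple with nondiscrete image.

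What is missing is a bootstrap that raises $d$ by changing the isotropic triple. The paper uses the partial $U_A$-orbit closure to slide $p(b_3)$ to a well-chosen value, and then passes to the triple $\{a_1,a_2,b_3\}$. The steps are as follows.
\begin{itemize}
\item If $d=1$, after reordering the closure moves $p(b_3)=x'z_1+y'z_2$ through uncountably many values. One can therefore avoid the countably many with $[\Q(x',y'):\Q]\le 3$. This gives $d\ge 2$ for the new triple, with $1,x',y'$ dependent if $d=2$.
\item If $d=2$ with $1,x,y$ dependent and $\Lam(p)=\C$, one slides $p(b_3)$ along a line, off countably many lines, to make $1,x,y$ independent.
\item When $1,x,y$ are independent, the closure projects onto all of $\C$ in each coordinate $p(b_j)$, $j=1,2,3$. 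A generic choice then gives $d=5$, after which the $B$-periods are free subject to volume, and one iterates.
\end{itemize}
For $\Lam(p)\cong\R+i\Z$ the ceiling is $d=2$, which is exactly the real directions.

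Your plan of first normalizing $\im p$ and then working in its stabilizer is viable in principle, since if $\im p$ vanishes on $A$ then $U_A$ fixes $\im p$. But it meets the same issue. The closure of $\{Sz : S \text{ symmetric integral}\}$ is governed by $\Q$-relations among the products $z_jz_k$, so you still need a bootstrap to make them generic.

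Finally, in the discrete case, $p(H_1)=\Z+i\Z$ means $(x,y)\colon H_1\to\Z^2$ is onto. This is stronger than $x$ and $y$ both being primitive; for example, $x=a_1^*$, $y=a_1^*+2b_1^*$ are both primitive but not jointly onto. With that correction, your reduction does show $V_a^\Lam$ is a single orbit. The paper does not prove this; it only notes that the orbit is closed.
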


We do not provide a new argument in the case where $\Lam(p) \cong \Z + i\Z$, so we will not address this case here. We only remark that it is easy to see that the orbit $\Sp(H_1) \cdot p$ is closed in this case. \\

\paragraph{\bf $U_A$-orbit closures.} To set up our argument, fix a symplectic basis $\{a_j,b_j\}_{j=1}^g$ for $H_1$. Denote the $\Z$-span of $a_1,\dots,a_g$ by $A$, and the $\Z$-span of $b_1,\dots,b_g$ by $B$. Then $A$ is a Lagrangian in $H_1$, and its pointwise stabilizer in $\Sp(H_1)$ is a free abelian subgroup $U_A$ of rank $g(g+1)/2$ generated by the following two types of transformations:
\begin{itemize}
    \item Shears $s_j$, $1 \leq j \leq g$, given by $s_j(b_j) = b_j - a_j$.
    \item Factor mixes $m_{jk}$, $1 \leq j < k \leq g$, given by $m_{jk}(b_j) = b_j - a_k$, $m_{jk}(b_k) = b_k - a_j$.
\end{itemize}
Here, any basis elements not mentioned are fixed.

Fix $p \in V$, let $z_j = p(a_j)$ for $1 \leq j \leq g$, and let $z = (z_1,\dots,z_g)$. Consider the {\em affine slice}
\be
W(z) = \left\{(w_1,\dots,w_g) \in \C^g : \sum_{j=1}^g \im(\ol{z_j}w_j) = \vol(p) \right\} .
\ee
Note that $W(z)$ is a real affine subspace of $\C^g$ of real dimension $2g - 1$. The action of $U_A$ on $V$ only changes the $B$-periods of a homomorphism while leaving the $A$-periods fixed. Thus, by forgetting the $A$-periods, the action of $U_A$ on $V$ induces an action on each affine slice $W(z)$ with $z \neq 0$. Moreover, this action of $U_A$ on $W(z)$ is by translations in $\C^g$. To see this, let $e_1,\dots,e_g$ be the standard basis of $\C^g$. Then for $w = (w_1,\dots,w_g) \in \C^g$, we have
\be
s_j \cdot w = w + z_j e_j, \quad m_{jk} \cdot w = w + z_k e_j + z_j e_k .
\ee

We begin by studying $U_A$-orbit closures in $W(z) \subset \C^g$ in the case where $g = 3$. We say that two complex numbers $u_1,u_2$ are {\em nonparallel} if they are nonzero and $u_2/u_1 \notin \R$. Otherwise, we say that $u_1,u_2$ are {\em parallel}. Fix $z = (z_1,z_2,z_3) \in \C^3$, and suppose that the components $z_1,z_2,z_3$ are pairwise nonparallel. We can then write $z_3 = xz_1 + yz_2$ for unique $x,y \in \R$. Denote
\be
d(z) = -1 + \dim_\Q \Span_\Q\lr{1,x,y,x^2,xy,y^2} .
\ee
It turns out that $d(z)$ is well-defined under permutations of $z_1,z_2,z_3$.

\begin{lem} \label{lem:UAdim}
Fix $z = (z_1,z_2,z_3) \in \C^3$ such that $z_1,z_2,z_3$ are pairwise nonparallel. The $U_A$-orbit closures in $W(z)$ are translates of closed subgroups of real dimension $d(z)$.
\end{lem}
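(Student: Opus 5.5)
The idea is to reduce the statement to the structure of closures of finitely generated subgroups of a real vector space. By the formulas $s_j \cdot w = w + z_j e_j$ and $m_{jk}\cdot w = w + z_k e_j + z_j e_k$, the group $U_A$ acts on $W(z)$ through translations by the subgroup $\Gamma \subset \C^3$ generated by the six vectors
\be
v_j = z_j e_j \ (1\le j\le 3), \qquad v_{jk} = z_k e_j + z_j e_k \ (1\le j<k\le 3).
\ee
Each of these vectors lies in the real linear subspace $W_0 = \{w : \sum_j \im(\ol{z_j} w_j) = 0\}$, since $\im(\ol{z_j}z_j)=0$ and $\im(\ol{z_j}z_k + \ol{z_k}z_j)=0$. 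Note that $W_0$ has real dimension $5$ and that $W(z)$ is a translate of it. So the $U_A$-orbit closure of $w \in W(z)$ is $w + \ol{\Gamma}$. It therefore suffices to show that $\ol{\Gamma}$ is a closed subgroup of $W_0$ whose identity component has dimension $d(z)$.

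Let $L : \R^6 \ra W_0$ be the real linear map sending the standard basis to $v_1,v_2,v_3,v_{12},v_{13},v_{23}$, so that $\Gamma = L(\Z^6)$. The first step is to compute $\ker L$. Write a real relation $\sum c_j v_j + \sum c_{jk} v_{jk} = 0$ coordinate by coordinate and substitute $z_3 = xz_1 + yz_2$. Because $z_1,z_2$ are nonparallel, they are $\R$-linearly independent, so each coordinate gives two real equations. Solving these gives a one-dimensional kernel spanned by
\be
r = (x^2,\, y^2,\, 1,\, xy,\, -x,\, -y).
\ee
In particular $L$ has rank $5$ and is surjective onto $W_0$. (Nonparallelism of $z_3$ with $z_1,z_2$ gives $x,y \neq 0$, although this is not essential here.)

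Next I would apply the standard description of closures of subgroups of vector spaces. The identity component $V$ of $\ol{\Gamma}$ is the common kernel of all linear functionals $f$ on $W_0$ with $f(\Gamma)\subset\Z$. Equivalently, $V^\perp$ is spanned by such $f$. Pulling back by $L$ identifies these functionals with the integer vectors in $\R^6$ orthogonal to $\ker L = \R r$. The lattice $\{n \in \Z^6 : n\cdot r = 0\}$ has rank $6 - \dim_\Q \Span_\Q\lr{x^2,y^2,1,xy,x,y}$, since it is the kernel of the $\Q$-linear map $\Q^6 \ra \R$, $n \mapsto n\cdot r$. Because $L$ is surjective, these integer vectors are linearly independent exactly when the corresponding functionals are. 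Hence
\be
\dim V = 5 - \bigl(6 - \dim_\Q\Span_\Q\lr{1,x,y,x^2,xy,y^2}\bigr) = d(z).
\ee
Finally, $\ol{\Gamma}$ is a closed subgroup, and $w + \ol{\Gamma}$ is the asserted translate.

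The step requiring most care is the duality fact that the identity component of $\ol{\Gamma}$ is exactly the annihilator of the integer-valued functionals on $\Gamma$. I would prove it by splitting $\ol{\Gamma} = V \oplus D$ with $D$ discrete and passing to the quotient $W_0/V$. In the quotient, the image of $\Gamma$ is discrete and hence a lattice in its span, which produces enough integer-valued functionals to cut out exactly $V$. The kernel computation is routine linear algebra.
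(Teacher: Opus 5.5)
Your proposal is correct and is essentially the paper's argument. The paper also isolates the unique real relation among the six translation vectors: its expansion of $v_6$ in terms of $v_1,\dots,v_5$ has coefficient vector $(u,-1)$ with $u=(x,y^2/x,1/x,y,-y/x)$, which is your kernel vector $r$ divided by $x$ after reordering. It then reads off the dimension of the closure from the rank of the group of integer relations, using Kronecker's theorem on $\R^5/\Z^5$. Working with all six generators and the duality description of closed subgroups, as you do, only avoids the division by $x$ and the paper's final rescaling step.
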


\begin{proof}
Recall that $W(z)$ is a real affine subspace of $\C^3$ of real dimension $5$. Write $W(z) = w_0 + V(z)$, where $w_0 \in \C^3$ and $V(z)$ is the real vector space
\be
V(z) = \{(w_1,w_2,w_3) \in \C^3 : \sum_{j=1}^3 \im(\ol{z}_j w_j) = 0\} .
\ee
Since $g = 3$, the group $U_A$ is generated by $3$ shears $s_j$ and $3$ factor mixes $m_{jk}$, and the associated translation vectors in $\C^3$ are $z_j e_j$, $1 \leq j \leq 3$, and $z_j e_k + z_k e_j$, $1 \leq j < k \leq 3$. These translation vectors lie in $V(z)$. Relabel these vectors by
\be
v_1 = z_1 e_1, v_2 = z_2 e_2, v_3 = z_3 e_3, v_4 = z_2 e_1 + z_1 e_2, v_5 = z_3 e_2 + z_2 e_3, v_6 = z_3 e_1 + z_1 e_3 .
\ee

The first $5$ translation vectors $v_1,\dots,v_5$ generate a lattice $\Lam(z)$ in $V(z)$. Equivalently, these vectors are $\R$-linearly independent. Indeed, an $\R$-linear relation $\sum_{j=1}^5 t_j v_j = 0$ would imply
\be
t_1 z_1 + t_4 z_2 = 0, \quad t_2 z_2 + t_4 z_1 + t_5 z_3 = 0, \quad t_3 z_3 + t_5 z_2 = 0 .
\ee
Since $z_1,z_2$ are nonparallel, $t_1 = 0$ and $t_4 = 0$. Since $z_2,z_3$ are nonparallel, $t_3 = 0$ and $t_5 = 0$. Then $t_2 z_2 = 0$ implies $t_2 = 0$, and thus the $\R$-linear relation is trivial.

Write $z_3 = x z_1 + y z_2$ with $x,y \in \R$. Note that $x \neq 0$ and $y \neq 0$ since $z_1,z_2,z_3$ are nonparallel. The vector $v_6$ is an $\R$-linear combination of $v_1,\dots,v_5$ in a unique way, given by
\be
v_6 = x v_1 + \frac{y^2}{x} v_2 + \frac{1}{x} v_3 + y v_4 - \frac{y}{x} v_5 .
\ee
Let $u = (x, y^2/x, 1/x, y, -y/x)$, and consider the $\R$-linear isomorphism $L : \R^5 \ra V(z)$, $L(t_1,\dots,t_5) = \sum_{j=1}^5 t_j v_j$. The preimage under $L$ of the $U_A$-orbit of $0 \in V(z)$ is given by $\Z^5 + \Z u \subset \R^5$. Let $O(u) \subset \R^5$ be the closure of $\Z^5 + \Z u$.

By Kronecker's theorem, the closure of $\Z u$ in $\R^5 / \Z^5$ is the smallest closed subgroup of $\R^5 / \Z^5$ containing $u$. Equivalently in $\R^5$, letting $R(u)$ be the set of $v \in \Z^5$ such that $v \cdot u \in \Z$, we have
\be
O(u) = \{r \in \R^5 : r \cdot v \in \Z \text{ for all } v \in R(u) \} .
\ee
The identity component of $O(u)$ is then the real vector subspace given by
\be
O_0(u) = \{r \in \R^5 : r \cdot v = 0 \text{ for all } v \in R(u) \} ,
\ee
and $O(u)$ is a union of translates of $O_0(u)$. Since $R(u)$ is identified with the group of $\Z$-linear relations among $1$ and the components of $u$, we then have
\be
\dim_\R O_0(u) = 5 - \rank R(u) = -1 + \dim_\Q \Span_\Q \{1,x,y^2/x,1/x,y,-y/x\} .
\ee
Scaling $1$ and the components of $u$ all by $x$, we conclude that this dimension is $d(z)$.
\end{proof}

Next, we spell out the possibilities for $d(z)$ more explicitly.

\begin{lem} \label{lem:dzcases}
Fix $x,y \in \R$, and let $d = -1 + \dim_\Q \Span_\Q\lr{1,x,y,x^2,xy,y^2}$.
\begin{itemize}
    \item $d = 0$ if and only if $x,y \in \Q$.
    \item $d = 1$ if and only if $\Q(x,y)$ is a real quadratic field.
    \item $d = 2$ if and only if $\Q(x,y)$ is a real cubic field, or $1,x,y$ are $\Q$-linearly dependent and $\Q(x,y)$ has degree at least $3$.
    \item If $d \geq 3$, then $1,x,y$ are $\Q$-linearly independent.
\end{itemize}
\end{lem}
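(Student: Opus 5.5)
Set $D = \dim_\Q \Span_\Q\lr{1,x,y,x^2,xy,y^2}$, so $d = D-1$. The plan is to split according to $r = \dim_\Q \Span_\Q\lr{1,x,y} \in \{1,2,3\}$ and, within each case, compare $D$ with the degree of the field $K = \Q(x,y)$. The fourth bullet is just the contrapositive of what comes out of the cases $r=1,2$. There we get $D \leq 3$, so $d \leq 2$. Hence $d\geq 3$ forces $r=3$, i.e.\ $1,x,y$ are $\Q$-linearly independent.

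\emph{Case $r=1$.} Here $x,y\in\Q$, so $D=1$ and $d=0$. Conversely, if $d=0$ then $D=1$, so $x,y\in\Q$. This gives the first bullet.

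\emph{Case $r=2$.} Then $\Span_\Q\lr{1,x,y} = \Span_\Q\lr{1,t}$ for a single irrational $t$; take $t=x$ or $t=y$, whichever is irrational. Each of $x,y$ is $\alpha+\beta t$ with $\alpha,\beta\in\Q$, so every monomial of degree $\le 2$ in $x,y$ lies in $\Span_\Q\lr{1,t,t^2}$. Moreover $t^2$ itself occurs among $x^2,y^2$. Hence the span in question is exactly $\Span_\Q\lr{1,t,t^2}$, and $K=\Q(t)$. Therefore $D=2$ precisely when $t$ is quadratic, and $D=3$ precisely when $\deg t\ge 3$, including the transcendental case. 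The first situation is ``$K$ real quadratic''; the second is ``$1,x,y$ dependent and $[K:\Q]\ge 3$''.

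\emph{Case $r=3$.} Now $D\ge 3$, and equality holds iff $x^2,xy,y^2\in \Span_\Q\lr{1,x,y}$. In that case $E=\Span_\Q\lr{1,x,y}$ is closed under multiplication by $x$ and $y$, hence is a ring. A finite-dimensional $\Q$-algebra that is a domain is a field, so $E=K$ is a cubic field. It is real since $x,y\in\R$. Conversely, if $K$ is a real cubic field and $1,x,y$ are independent, they span $K$, so all the products lie in $E$ and $D=3$. Also, if $K$ is cubic and $r=2$, the case above gives $D=3$ as well. If $K$ is cubic then $r\neq 1$.

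Assembling the bullets: $d=1$ occurs only for $r=2$ with $K$ quadratic. Conversely, if $K$ is real quadratic then $r=2$, since $[K:\Q]=2$ and $K\neq\Q$, so $d=1$. For $d=2$, the possibilities are ($r=2$, $[K:\Q]\ge3$) or ($r=3$, $K$ cubic); this matches the stated dichotomy. The only step needing care is the closure argument in the case $r=3$, namely that $D=3$ forces $E$ to be a field of degree exactly $3$. This uses that a finite-dimensional domain over $\Q$ is a field. Everything else is bookkeeping. One must also check that in the case $r=2$ the span of $x^2,xy,y^2$ really reaches $t^2$; it does, because $t$ is one of $x,y$.
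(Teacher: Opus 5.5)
Your proof is correct and follows essentially the same route as the paper: you split on whether $1,x,y$ are $\Q$-linearly dependent, reduce the dependent case to $\Span_\Q\lr{1,t,t^2}$ with $t \in \{x,y\}$, and in the independent case use that a finite-dimensional $\Q$-subalgebra of $\R$ is a field. Your handling of the converse directions is in fact a bit more explicit than the paper's, for instance showing that a real cubic $\Q(x,y)$ gives $d=2$ whether or not $1,x,y$ are independent.
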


\begin{proof}
The $d = 0$ case is clear.

If $d = 1$, then after swapping $x$ and $y$, we may assume $x$ is irrational. Since $1,x,x^2$ are $\Q$-linearly dependent, $\Q(x) = \Q + \Q x$ is a real quadratic field. Since $1,x,y$ are $\Q$-linearly dependent, $\Q(x,y) = \Q(x)$. Conversely, if $\Q(x,y)$ is a real quadratic field, then at least one of $x$ and $y$ is irrational, so since $1,x,y,x^2,xy,y^2$ all lie in $\Q(x,y)$, their $\Q$-span has $\Q$-dimension $2$ and so $d = 1$.

Suppose $1,x,y$ are $\Q$-linearly dependent. By swapping $x$ and $y$, we may assume $y \in \Span_\Q\{1,x\}$. Then $x^2,xy,y^2$ all lie in $\Span_\Q\{1,x,x^2\}$, so $d \leq 2$ with equality if and only if $1,x,x^2$ are $\Q$-linearly independent. In the $d = 2$ case, the field $\Q(x,y) = \Q(x)$ has degree at least $3$ (possibly infinite degree). Note that when $d \geq 3$, we have shown $1,x,y$ are $\Q$-linearly independent.

Lastly, suppose $1,x,y$ are $\Q$-linearly independent. Then $d \geq 2$, with equality if and only if $F = \Span_\Q\{1,x,y\}$ is a $\Q$-algebra. Since $F \subset \R$, this algebra is finite-dimensional with no zero divisors, and is thus a field. This means $\Q(x,y) = F$ is a cubic field.
\end{proof}

\paragraph{\bf Bases with nonparallel values.} Suppose $p \in V$ is such that $\Lam(p) = \C$ or $\Lam(p) \cong \R + i\Z$. In order to show that the $\Sp(H_1)$-orbit closure of $p$ is large, we will find a triple of elements of $H_1$ that span an isotropic subspace $A$ such that $p(A)$ is a nondiscrete subgroup of $\C$, and then we will apply Lemma \ref{lem:UAdim}. We will then bootstrap to find other homomorphisms in the $\Sp(H_1)$-orbit closure and other triples with stronger incommensurability properties, meaning higher values of $d(z)$, to conclude. To begin this bootstrap argmuent, we will need a symplectic basis of $H_1$ on which $p$ takes nonparallel values.

\begin{lem} \label{lem:nonparallelbasis}
For any $p \in V$, there is a symplectic basis $c_1,\dots,c_{2g}$ of $H_1$ such that the values $p(c_1),\dots,p(c_{2g})$ are pairwise nonparallel.
\end{lem}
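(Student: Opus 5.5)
Start from an arbitrary symplectic basis $\{a_j,b_j\}_{j=1}^g$ and modify it with explicit elements of $\Sp(H_1)$. The goal is to make every value nonzero and every ratio of two values non-real. The modifications will all be of the form
\be
b_j \mapsto b_j + n a_j, \qquad a_j \mapsto a_j + n b_j,
\ee
or factor mixes between different handles, with $n \in \Z$ chosen large and generic. Since $\vol(p) > 0$, some handle has $\im(\ol{p(a_j)}p(b_j)) > 0$. Fix such a handle, say $j=1$. Then $p(a_1)$ and $p(b_1)$ are nonparallel, so $\Lam_1 := \Z p(a_1) + \Z p(b_1)$ is a lattice in $\C$.

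The first step is to make every value nonparallel to the fixed direction $p(a_1)$. For another handle $j \geq 2$, use a factor mix between handle $1$ and handle $j$, namely $b_j \mapsto b_j + n a_1$ and $b_1 \mapsto b_1 + n a_j$, and similarly with the roles of $a$ and $b$ switched. Such a move adds to $p(b_j)$ a large multiple of $p(a_1)$, or of $p(b_1)$ for the analogous move. Adding $n p(b_1)$ makes the value nonparallel to $p(a_1)$ for all large $n$. The compensating change to $p(b_1)$ or $p(a_1)$ must then be controlled, which is why I would interleave the moves carefully. A cleaner alternative is to first arrange that every $p(a_j)$ and $p(b_j)$ for $j \geq 2$ is nonzero and nonparallel to $p(a_1)$. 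Then apply shears and mixes only on handles $j\geq 2$ whose translation involves $a_1$, $b_1$ through transvections $T_v(x) = x + \lr{x,v}v$ with $v = a_1 + b_j$, etc.

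The key observation is a genericity lemma. Suppose $u$ and $w$ are complex numbers with $w \neq 0$ and $w$ nonparallel to some fixed $\zeta$. Then $u + nw$ is parallel to $\zeta$ for at most one $n\in\Z$. Similarly, for fixed $u, u', w, w'$ with $w, w'$ nonparallel, the set of $n$ for which $u+nw$ and $u'+nw'$ are parallel is the zero set of $\im(\ol{(u+nw)}(u'+nw'))$. This is a real quadratic polynomial in $n$ with leading coefficient $\im(\ol{w}w') \neq 0$, so it has at most $2$ integer solutions. Therefore each "bad" coincidence excludes only finitely many $n$. Choosing $n$ outside a finite set at each stage destroys no previously established nonparallelism among the values that do not change, and creates the new ones.

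Concretely, the plan proceeds in three stages. (i) Using the transvection $T_{v}$ with $v=a_1$ or $v=b_1$ combined with symplectic swaps within handle $1$, arrange $p(a_1)$ and $p(b_1)$ nonparallel; this holds already. (ii) For each $j\geq2$, replace $a_j$ and $b_j$ by images under transvections along $v = a_j + a_1$ or $v = b_j+b_1$, iterated $n$ times. These add $n\cdot(\text{lattice vector of }\Lam_1)$ to $p(a_j)$ and $p(b_j)$, and alter $p(a_1)$ and $p(b_1)$ by multiples of $p(a_j)$ and $p(b_j)$. Choose $n$ large and outside the finitely many bad values, so that all values become nonzero and pairwise nonparallel with the values already fixed. (iii) Finally, fix parallelism inside handle $1$ if it was disturbed, again using the quadratic-polynomial count. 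The main obstacle is bookkeeping in (ii). Each transvection moves two handles at once, so one must check that the finitely many constraints are genuinely nontrivial, meaning the leading coefficient $\im(\ol{w}w')$ is nonzero. I would handle this by always choosing the added direction from $\{p(a_1),p(b_1)\}$ to be nonparallel to the relevant existing value, which is possible because $p(a_1)$ and $p(b_1)$ are themselves nonparallel.
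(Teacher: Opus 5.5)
Your strategy of applying generic powers of symplectic transvections and excluding finitely many bad $n$ is the right kind of idea. However, the proposal has a genuine gap exactly at the step you call bookkeeping, and your genericity lemma does not apply to the moves you actually make.

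A power $T_v^n$ shifts every value by an integer multiple of the \emph{same} number $p(v)$. So for two values that both move, the increments are parallel and the $n^2$ coefficient $\im(\ol{w}w')$ is zero. The parallelism condition is therefore affine in $n$, and it can vanish identically.

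Also, $T_{a_j+a_1}$ fixes $a_j$ and $a_1$ and shifts $b_j$ and $b_1$ by the same multiple of $a_j+a_1$; $T_{b_j+b_1}$ acts dually. These moves do not add lattice vectors of $\Lam_1$ to $p(a_j)$ and $p(b_j)$, and they cannot repair a zero value of $p(a_j)$.

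Here is a concrete failure. Take $g=2$ with $p(a_1)=p(a_2)=1$ and $p(b_1)=p(b_2)=i$. The handle swap $\sigma$ fixes $p$, and since $\sigma T_v\sigma^{-1}=T_{\sigma v}$, it commutes with $T_{a_1+a_2}$, $T_{b_1+b_2}$ and the inter-handle factor mixes you list. Hence every composition $M$ of your stage (ii) moves satisfies $p(Ma_1)=p(Ma_2)$ and $p(Mb_1)=p(Mb_2)$, for all exponents. Stage (iii) has no mechanism that avoids re-disturbing the other values.

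What is missing is a nondegeneracy condition built into the choice of transvection. The paper applies a single $T(x)=x+(x\cdot e)e$ to the whole basis $c_1,\dots,c_{2g}$, with $e$ chosen outside finitely many subgroups of rank less than $2g$, in two passes.
\begin{enumerate}
\item First choose $e$ with $p(e)\neq 0$ and $e\cdot c_j\neq 0$ for all $j$. Then all values $p(T^N c_j)$ are nonzero for all but finitely many $N$.
\item Then choose $e$ with $e\cdot c_j\neq 0$, $p(e)\notin\R p(c_j)$, and $e\cdot c_j\neq q_{jk}(e\cdot c_k)$ whenever $p(c_j)/p(c_k)=q_{jk}\in\Q$. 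These are proper-rank conditions because $\vol(p)\neq 0$ keeps $p(H_1)$ off a real line, and because $c_j\neq c_k$.
\end{enumerate}
With this choice,
\[
\im\bigl(\ol{p(T^nc_j)}\,p(T^nc_k)\bigr)=\im\bigl(\ol{p(c_j)}p(c_k)\bigr)+n\bigl((c_k\cdot e)\im(\ol{p(c_j)}p(e))-(c_j\cdot e)\im(\ol{p(c_k)}p(e))\bigr)
\]
is a nonzero affine function of $n$. Its constant term is nonzero for pairs that are already nonparallel. For $p(c_j)=r\,p(c_k)$ its slope is $\bigl(r(c_k\cdot e)-(c_j\cdot e)\bigr)\im(\ol{p(c_k)}p(e))\neq 0$. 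So only finitely many $n$ are bad. Your proposal needs this argument, or an equivalent explicit check that each pairwise condition is a nontrivial equation in the free parameters, to be complete.
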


\begin{proof}
Fix an arbitrary symplectic basis for $H_1$ and denote its elements by $c_1,\dots,c_{2g}$. Since $p \neq 0$, $\ker(p)$ and $c_j^\perp$ are submodules of $H_1$ of rank less than $2g$. Thus, there exists $e \in H_1$ such that $p(e) \neq 0$ and $e \cdot c_j \neq 0$ for all $j$. Let $T \in \Sp(H_1)$ be the transvection given by $T(c_j) = c_j + (c_j \cdot e) e$ for all $j$. For each $j$, since $p(e) \neq 0$ and $c_j \cdot e \neq 0$, there is at most one $n_j \in \Z$ such that $p(T^{n_j}(c_j)) = p(c_j) + n_j(c_j \cdot e)p(e) = 0$. In particular, there is $N \in \Z$ such that the symplectic basis $T^N(c_1),\dots,T^N(c_{2g})$ satisfies $p(T^N(c_j)) \neq 0$ for all $j$.

Thus, we may now assume that $p(c_j) \neq 0$ for all $j$. Since $\vol(p) \neq 0$, the image of $p$ is not contained in a real line through the origin, so $M_j = \{e \in H_1 : p(e) \in \R p(c_j)\}$ is a submodule of $H_1$ of rank less than $2g$. For any $j < k$ such that $p(c_j) / p(c_k) \in \Q$, let $q_{jk} = p(c_j) / p(c_k)$. Since $c_j,c_k$ are distinct basis elements, $M_{jk} = \{e \in H_1 : e \cdot c_j = q_{jk} (e \cdot c_k)\}$ is a submodule of $H_1$ of rank less than $2g$. Then as before, there is $e \in H_1$ such that $e \cdot c_j \neq 0$ and $p(e) \neq \R p(c_j)$ for all $j$, and such that $e \cdot c_j \neq q_{jk} (e \cdot c_k)$ for all $j < k$ with $p(c_j) / p(c_k) \in \Q$. Let $T$ be the transvection given by $T(c_j) = c_j + (c_j \cdot e) e$ for all $j$. Fix $n \in \Z$. Since $p(e) \notin p(c_j)$, we have $p(T^n(c_j)) = p(c_j) + n (c_j \cdot e) p(e) \neq 0$. And if $j < k$ and $p(T^n(c_j))$ and $p(T^n(c_k))$ are parallel, then we have
\begin{align*}
0 &= \im((\ol{p(c_j) + n(c_j \cdot e)p(e)})(p(c_k) + n(c_k \cdot e)p(e))) = 0 \\
& = \im(\ol{p(c_j)}p(c_k)) + n\left((c_k \cdot e)\im(\ol{p(c_j)}p(e)) - (c_j \cdot e)\im(\ol{p(c_k)}p(e))\right)
\end{align*}
which is an affine equation in $n$ and has at most one solution in $n$ as long as at least one coefficient is nonzero. If $p(c_j)$ and $p(c_k)$ are nonparallel, then the first coefficient is $\im(\ol{p(c_j)}p(c_k)) \neq 0$, so suppose $p(c_j)$ and $p(c_k)$ are parallel. If $p(c_j) = r p(c_k)$ with $r \in \R$, then the second coefficient becomes
\be
(r(c_k \cdot e) - (c_j \cdot e))\im(\ol{p(c_k)}p(e)) .
\ee
Since $p(e) \notin \R p(c_k)$, we have $\im(\ol{p(c_k)}p(e)) \neq 0$. And since $c_k \cdot e$ and $c_j \cdot e$ are integers, either $r \notin \Q$, or $r \in \Q$ and $e \notin M_{jk}$, so $r(c_k \cdot e) - (c_j \cdot e) \neq 0$. Thus, for all but finitely many $n \in \Z$, the symplectic basis $T^n(c_1),\dots,T^n(c_{2g})$ has the desired property.
\end{proof}

Two lattices $\Lam_1,\Lam_2 \subset \C$ are {\em commensurable} if $\Lam_1 \cap \Lam_2$ has finite index in both $\Lam_1$ and $\Lam_2$. In this case, $\Lam_1 + \Lam_2$ is again a lattice in $\C$.

\begin{lem} \label{lem:nondiscreteLag3}
Suppose $g \geq 3$. Fix $p \in V$ and suppose $\Lam(p)$ is not discrete. Then there are $a_1,a_2,a_3$ that are part of a Lagrangian in $H_1$ such that $p(a_1),p(a_2),p(a_3)$ generate a nondiscrete subgroup of $\C$.
\end{lem}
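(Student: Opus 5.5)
We argue by contradiction, starting from a symplectic basis $a_1,b_1,\dots,a_g,b_g$ given by Lemma \ref{lem:nonparallelbasis}, so that the values of $p$ on the basis are pairwise nonparallel. For this basis, any triple obtained by choosing one element from each of three distinct handles $\{a_j,b_j\}$, $\{a_k,b_k\}$, $\{a_l,b_l\}$ is part of a Lagrangian. The same holds if one of these elements is replaced by an arbitrary primitive vector $x$ in the span of its handle, since $x$ extends to a symplectic basis of that handle. Suppose, for contradiction, that every such triple has values generating a discrete subgroup of $\C$.

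We use two elementary facts. First, if $u,v \in \C$ are nonparallel, $L = \Z u + \Z v$ is the lattice they generate, and $w \in \C$ is such that $\Z u + \Z v + \Z w$ is discrete, then $w \in \Q L$. Here $\Q L = \Q u + \Q v$ is a $2$-dimensional $\Q$-vector space. This holds because a discrete subgroup containing the lattice $L$ is itself a lattice containing $L$ with finite index. Second, any finitely generated subgroup of $\Q L$ is contained in $\frac{1}{N}L$ for some $N$, and is therefore discrete. Also, if $L'$ is another lattice contained in $\Q L$, then $\Q L' = \Q L$.

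Let $L = \Z p(a_1) + \Z p(a_2)$. By nonparallelness this is a lattice. For each $j \geq 3$, the triples $(a_1,a_2,a_j)$ and $(a_1,a_2,b_j)$ have discrete image, so by the first fact $p(a_j),p(b_j) \in \Q L$. Using $g \geq 3$, the vector $a_3$ exists, and $p(a_2),p(a_3)$ are nonparallel and lie in $\Q L$. Hence $L' = \Z p(a_2) + \Z p(a_3)$ satisfies $\Q L' = \Q L$. Applying the first fact to the triple $(b_1,a_2,a_3)$ gives $p(b_1) \in \Q L$. Symmetrically, applying it to $(a_1,b_2,a_3)$ with the lattice $\Z p(a_1) + \Z p(a_3)$, whose $\Q$-span is again $\Q L$, gives $p(b_2) \in \Q L$. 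Thus $p(H_1)$ is a finitely generated subgroup of $\Q L$. By the second fact it is discrete, which contradicts the assumption that $\Lam(p)$ is not discrete.

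The only point requiring care is the bookkeeping that each triple used really lies in a Lagrangian, and that the intermediate lattices are genuinely lattices. Both follow from the choice of a basis with pairwise nonparallel values from Lemma \ref{lem:nonparallelbasis}. That choice is exactly what lets us pass between the different lattices $L$ and $L'$ while staying inside the single $\Q$-plane $\Q L$, so I expect this step to be the main (mild) obstacle. The hypothesis $g \geq 3$ enters precisely in having a third handle available to carry this transfer between lattices.
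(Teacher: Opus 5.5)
Your proof is correct and follows essentially the same idea as the paper. Both arguments use the nonparallel basis from Lemma \ref{lem:nonparallelbasis} together with the fact that a discrete group containing a lattice is commensurable with it, which is exactly your statement that its values lie in $\Q L$; so if every isotropic triple of basis elements had discrete image, $p(H_1)$ would be discrete. The only difference is bookkeeping: the paper first reorders the handles so that $\Lam_1,\Lam_2$ are incommensurable and then uses four triples from the first three handles, whereas you fix $\Q L$ with $L = \Z p(a_1) + \Z p(a_2)$ and propagate it directly to every basis value, which removes the reordering step.
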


\begin{proof}
By Lemma \ref{lem:nonparallelbasis}, there is a symplectic basis $\{a_j,b_j\}_{j=1}^g$ on which $p$ takes nonparallel values. For $1 \leq j \leq g$, let $\Lam_j$ be the lattice in $\C$ generated by $p(a_j),p(b_j)$. The lattices $\Lam_j$ cannot be pairwise commensurable, otherwise $p(H_1) = \Lam_1 + \cdots + \Lam_g$ would be discrete, contradicting our assumption that $\Lam(p) = \ol{p(H_1)}$ is not discrete. By reordering, we may assume $\Lam_1$ and $\Lam_2$ are incommensurable.

Let $\Lam_0^\pr$ be the group generated by $p(a_1),p(a_2),p(a_3)$, and for $1 \leq j \leq 3$, let $\Lam_j^\pr$ be the group generated by $p(b_j),p(a_{j+1}),p(a_{j+2})$, indices taken modulo $3$. Note that for $0 \leq j \leq 3$, the given generators of $\Lam_j^\pr$ are a triple of isotropic elements of $H_1$. Since $p$ takes nonparallel values on the symplectic basis $\{a_j,b_j\}_{j=1}^g$, the intersection $\Lam_0^\pr \cap \Lam_j^\pr$ contains a lattice for $1 \leq j \leq 3$. Then if $\Lam_0^\pr,\Lam_1^\pr,\Lam_2^\pr,\Lam_3^\pr$ were all lattices, they would be pairwise commensurable. However, then $\Lam_1 + \Lam_2 \subset \Lam_0^\pr + \Lam_1^\pr + \Lam_2^\pr + \Lam_3^\pr$ would also be a lattice, contradicting that $\Lam_1$ and $\Lam_2$ are incommensurable. Thus, at least one of the $\Lam_j^\pr$, $0 \leq j \leq 3$, must be nondiscrete.
\end{proof}

\paragraph{\bf The bootstrap argument.} Throughout this subsection, we assume $g \geq 3$. Fix $p \in V$, and fix a symplectic basis $\{a_j,b_j\}_{j=1}^g$ of $H_1$ such that the values $p(a_j),p(b_j)$, $1 \leq j \leq g$, are pairwise nonparallel, using Lemma \ref{lem:nonparallelbasis}. Let $c_1,c_2,c_3$ be distinct elements of this basis that span an isotropic subspace, and let $A = \Span_\Z \{c_1,c_2,c_3\}$. The {\em $p$-dimension of $A$} is defined to be $d(p(c_1),p(c_2),p(c_3))$. If $p(c_3) = x p(c_1) + y p(c_2)$ with $1,x,y \in \R$ being $\Q$-linearly independent, we say that $A$ has {\em $p$-type 1}. If $1,x,y$ are $\Q$-linearly dependent, we say that $A$ has {\em $p$-type 2}. We extend our notation to define $U_A$ as being generated by the shears and factor mixes coming from $A$.

\begin{lem} \label{lem:UAdim1}
Suppose that $A = \Span_\Z\{c_1,c_2,c_3\}$ has $p$-dimension $1$. Then there is $p^\pr$ in the $U_A$-orbit closure of $p$ and $A^\pr = \Span_\Z\{c_1^\pr,c_2^\pr,c_3^\pr\}$ of $p^\pr$-dimension at least $2$, and of $p^\pr$-type $2$ in the case of equality.
\end{lem}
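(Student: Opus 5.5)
The plan is to use the one-dimensional $U_A$-orbit closure from Lemma \ref{lem:UAdim} to move one of the dual periods continuously, and then swap that dual element into the isotropic triple. Write $z = (z_1,z_2,z_3) = (p(c_1),p(c_2),p(c_3))$, and let $c_1^*,c_2^*,c_3^*$ be the elements of the fixed symplectic basis dual to $c_1,c_2,c_3$. The group $U_A$ fixes the values on $A$ and acts by translations on $w = (p(c_1^*),p(c_2^*),p(c_3^*)) \in W(z)$. By Lemma \ref{lem:UAdim} with $d(z) = 1$, the orbit closure of $w$ is a translate of a closed subgroup whose identity component is a real line $\R v$, with $0 \neq v = (v^{(1)},v^{(2)},v^{(3)}) \in V(z)$. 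Hence for every $t \in \R$ there is $p_t$ in the $U_A$-orbit closure of $p$ with $p_t = p$ on $A$ and on all basis elements other than the $c_j^*$, and with $p_t(c_j^*) = p(c_j^*) + t v^{(j)}$.

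Since $v \neq 0$, some coordinate, say $v^{(3)}$ after relabeling, is nonzero. Set $A' = \Span_\Z\{c_1,c_2,c_3^*\}$. This subgroup is isotropic, because $c_3^*$ pairs nontrivially only with $c_3$, and it is spanned by elements of the same symplectic basis. We then write
\be
p_t(c_3^*) = (x_0 + t\alpha) z_1 + (y_0 + t\beta) z_2 ,
\ee
where $x_0,y_0,\alpha,\beta \in \R$ are the coordinates of $p(c_3^*)$ and $v^{(3)}$ in the real basis $z_1,z_2$ of $\C$. Because $v^{(3)} \neq 0$, we have $(\alpha,\beta) \neq (0,0)$, so $t \mapsto (x_t,y_t) = (x_0 + t\alpha, y_0 + t\beta)$ is injective.

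Next we choose $t$ generically to get all the needed properties at once. The following conditions each exclude only countably many $t$.
\begin{itemize}
\item Pairwise nonparallelism of all $p_t$-values on the basis: each parallelism condition is an affine equation in $t$. As in the proof of Lemma \ref{lem:nonparallelbasis}, it has at most one solution, since the condition holds at $t = 0$.
\item $d(z_1,z_2,p_t(c_3^*)) \le 1$: by Lemma \ref{lem:dzcases} this means $\Q(x_t,y_t)$ is $\Q$ or a real quadratic field. There are only countably many such pairs $(x_t,y_t)$, since both coordinates are algebraic of degree at most $2$.
\item $d = 2$ with $p_t$-type $1$: this means $\Q(x_t,y_t)$ is a real cubic field, again leaving only countably many pairs.
\end{itemize}
Injectivity of $t \mapsto (x_t,y_t)$ turns each of these countable sets of pairs into a countable set of excluded $t$. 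Any remaining $t$ gives $p' = p_t$ and $A'$ with $p'$-dimension at least $2$, and of $p'$-type $2$ in the case of equality. We take $c_1' = c_1$, $c_2' = c_2$, $c_3' = c_3^*$; the basis is unchanged, so it still has nonparallel $p'$-values.

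The main point to check carefully is the first step. We must confirm that the whole line $w + \R v$ lies in the $U_A$-orbit closure, not just a discrete set of translates. This follows because the orbit closure is a translate of a closed subgroup with identity component $\R v$ and contains $w$. We must also confirm that the translation structure on $W(z)$ really corresponds to changing only the $c_j^*$-periods. If the chosen coordinate $v^{(j)}$ happened to be zero, we would simply pick another $j$, which is possible since $v \neq 0$.
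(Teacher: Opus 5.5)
Your proof is correct and follows the paper's argument: you take the line $w + \R v$ in the orbit closure from Lemma \ref{lem:UAdim}, swap the dual element $c_3^*$ (the paper's $b_3$) into the triple, and choose a point on that line that avoids the countably many pairs $(x,y)$ with $\Q(x,y)$ of degree at most $3$. Your extra genericity condition, keeping the whole basis pairwise nonparallel, is a worthwhile addition that the paper leaves implicit; it is needed, for example, to rule out $x_t = 0$ or $y_t = 0$. One small correction: that condition is quadratic rather than affine in $t$ when both basis elements are among the moved $c_j^*$, but it still has nonzero constant term, so only finitely many $t$ are excluded.
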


\begin{proof}
By permuting and negating the elements of our symplectic basis $\{a_j,b_j\}_{j=1}^g$ of $H_1$ if necessary, we may assume that $c_j = a_j$ for $1 \leq j \leq 3$. Let $z = (z_1,z_2,z_3) = (p(a_1),p(a_2),p(a_3))$. Since $d(z) = 1$, by Lemma \ref{lem:UAdim} the real dimension of the $U_A$-orbit closure of $w = (p(b_1),p(b_2),p(b_3))$ in $W(z)$ is $1$, so this $U_A$-orbit closure contains $w + \R u$ for some nonzero $u \in \C^3$. After reordering the $3$ coordinates, we may assume that the third coordinate is not constant along this $U_A$-orbit closure. In terms of $p$, this means that after reordering the pairs $(a_j,b_j)$, $1 \leq j \leq 3$, we may assume that $p(b_3)$ is not constant along the $U_A$-orbit closure of $p$.

Write $z_3 = x z_1 + y z_2$. By Lemma \ref{lem:dzcases}, $\Q(x,y)$ is a real quadratic field. In particular, for fixed $z_1$ and $z_2$, there are only countably many possibilities for $z_3$. Since $p(b_3)$ takes on uncountably many values along the $U_A$-orbit closure of $p$, there is $p^\pr$ in this $U_A$-orbit closure such that $p^\pr(b_3) = x^\pr z_1 + y^\pr z_2$ where $\Q(x^\pr,y^\pr)$ is not a field of degree at most $3$. Letting $A^\pr = \Span_\Z\{a_1,a_2,b_3\}$, by Lemma \ref{lem:dzcases} again, this means that $A^\pr$ has $p^\pr$-dimension at least $2$, with type $2$ in the case of equality.
\end{proof}

\begin{lem} \label{lem:UAtype1}
Suppose that $A = \Span_\Z\{c_1,c_2,c_3\}$ is of type $1$. There is $p^\pr$ in the $U_A$-orbit closure of $p$ and $A^\pr = \Span_\Z\{c_1^\pr,c_2^\pr,c_3^\pr\}$ of $p^\pr$-dimension $5$.
\end{lem}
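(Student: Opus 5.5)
We follow the pattern of Lemma \ref{lem:UAdim1}. After permuting and negating the symplectic basis we may assume $c_j = a_j$ for $1 \leq j \leq 3$. Set $z = (p(a_1),p(a_2),p(a_3))$ and $z_3 = x z_1 + y z_2$, where $1,x,y$ are $\Q$-linearly independent. By Lemma \ref{lem:dzcases}, $d(z) \geq 2$.

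\textbf{Step 1: the orbit closure has positive dimension.} By Lemma \ref{lem:UAdim}, the $U_A$-orbit closure of $w = (p(b_1),p(b_2),p(b_3))$ in $W(z)$ is a translate of a closed subgroup of real dimension $d(z) \geq 2$. So it contains $w + P$ for some real subspace $P$ of dimension at least $2$.

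\textbf{Step 2: find a coordinate with a $2$-dimensional image.} We want some coordinate $j$ whose projection $\pi_j : P \ra \C$ has real rank $2$, i.e.\ $p(b_j)$ ranges over an open subset of $\C$ along the orbit closure. If every $\pi_j$ has rank at most $1$, we argue as in Lemma \ref{lem:UAdim1} using only one coordinate that varies along a real line. This is still enough: $p(b_j)$ then ranges over an interval of an affine real line, hence takes uncountably many values. We only need a value outside a countable set.

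\textbf{Step 3: choose a generic value.} Take $p'$ in the $U_A$-orbit closure that agrees with $p$ on $a_1,a_2,a_3$ and on all basis elements outside the three pairs. Form the new triple $A' = \Span_\Z\{a_k, a_l, b_j\}$ with $\{j,k,l\} = \{1,2,3\}$, which is isotropic. Write $p'(b_j) = x' z_k + y' z_l$. The set of pairs $(x',y')$ for which $1,x',y',x'^2,x'y',y'^2$ satisfy some nontrivial $\Q$-linear relation is a countable union of real conics in $\R^2$, and we must avoid it.
\begin{itemize}
\item If $p'(b_j)$ varies over an open set (rank $2$), then by Baire category we can choose $(x',y')$ outside all these conics, which gives $d = 5$.
\item If $p'(b_j)$ varies only along a line segment $t \mapsto \alpha + t\beta$, then each conic meets the line in at most $2$ points, unless the line lies inside the conic. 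Avoiding countably many points then again gives $d = 5$.
\end{itemize}

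\textbf{Main obstacle.} The hard case is the degenerate one, where the line segment lies inside a rational conic. It is even possible that the line is a rational line $q_0 + q_1 x' + q_2 y' = 0$, in which case $d \leq 2$ forever along that coordinate. To handle this I would use the explicit generators from the proof of Lemma \ref{lem:UAdim}. The subgroup $O_0(u)$ is cut out by the relations $R(u)$, and in type $1$ the space $\Span_\Q\{1,x,y\}$ already has dimension $3$, so $R(u)$ involves only the quadratic terms. I would then show that $P$ contains a direction changing some coordinate $b_j$ by a vector whose $(z_k,z_l)$-coordinates are not on a rational line through $(x',y')$; otherwise we would get a relation among $1,x,y$, contradicting type $1$. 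If this fails for one coordinate, I would try the other choices of $j$, and also the triples mixing two $b$'s with one $a$, all of which are isotropic.
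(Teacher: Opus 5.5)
Your rank-$2$ branch (choose $p'(b_j)$ off the countably many real conics, then pass to $\Span_\Z\{a_k,a_l,b_j\}$) is exactly how the paper finishes. The gap is that you never show this branch occurs, and that is precisely where the type-$1$ hypothesis has to be used.

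In Step 1 you only extract $d(z) \geq 2$ from type $1$, and that is not enough. Every element of $U_A$ changes each $p(b_j)$ by an element of $p(A)$, so along the orbit closure $p(b_j)$ stays in $p(b_j) + \ol{p(A)}$. In the type-$2$, $d(z)=2$ setting of Lemma \ref{lem:UAdim2type2}, this set is a countable union of parallel lines. So there every coordinate projection has rank at most $1$ even though $d(z) = 2$.

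Your ``main obstacle'' paragraph is then only a plan, and the plan is too weak as stated. A line can lie in a degenerate rational conic without lying in a rational line. For instance, $x'^2 - 2y'^2 = 0$ contains the line $x' = \sqrt{2}\,y'$, on which $1,x',y'$ are generically $\Q$-linearly independent while $d \leq 4$. So excluding rational lines would not make the bad set on your segment countable.

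The missing idea is that type $1$ forces each coordinate projection of the orbit closure to be all of $\C$, so the degenerate case never arises. In the notation of the proof of Lemma \ref{lem:UAdim}:
\begin{itemize}
\item The $b_3$-coordinate of $L(t_1,\dots,t_5)$ is $t_3 z_3 + t_5 z_2$.
\item The projection of $\Z^5 + \Z u$ to the $(t_3,t_5)$-coordinates is $\Z^2 + \Z(1/x,-y/x)$. By Kronecker's theorem this is dense in $\R^2$ exactly because $1,x,y$ are $\Q$-linearly independent. The paper writes out the analogous computation for $b_1$, with $t_1 z_1 + t_4 z_2$ and $\Z^2 + \Z(x,y)$.
\item The closure of $\Z^5 + \Z u$ is a closed subgroup containing $\Z^5$. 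Its image in $\R^2/\Z^2$ is compact and contains a dense subgroup, hence is everything. Since its projection to $\R^2$ is a subgroup containing $\Z^2$, that projection is all of $\R^2$.
\item As $z_2,z_3$ are nonparallel, $p'(b_3)$ therefore takes every value in $\C$ as $p'$ ranges over the $U_A$-orbit closure of $p$.
\end{itemize}
Because that closure is a countable union of translates of its identity component, this also shows (by symmetry, for every $j$) that your $\pi_j|_P$ has rank $2$.

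To finish, take $p'(b_3) = t_1 z_1 + t_2 z_2$ with $1,t_1,t_2,t_1^2,t_1t_2,t_2^2$ $\Q$-linearly independent. Then $A' = \Span_\Z\{a_1,a_2,b_3\}$ has $p'$-dimension $5$, with no case analysis needed.
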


\begin{proof}
Again, we may assume $c_j = a_j$ for $1 \leq j \leq 3$. Since $A$ is of type $1$, the group generated by $p(a_1),p(a_2),p(a_3)$ is dense in $\C$. Let $z = (z_1,z_2,z_3) = (p(a_1),p(a_2),p(a_3))$, and let $\pi_j : V \ra \C$ be the projection given by $\pi_j(p_0) = p_0(b_j)$. Let $O_A(p)$ be the closure of $U_A \cdot p$. We will show that $\pi_j(O_A(p)) = \C$. By translating, it is enough to show that $\pi_j(O_A) = \C$, where $O_A$ is the closure of $U_A \cdot 0$. By symmetry, permuting the $3$ coordinates, it is furthermore enough to show this for $\pi_1$.

Recall from the proof of Lemma \ref{lem:UAdim} our labelling $v_1,\dots,v_6$ of the translation vectors arising from $U_A$. Recall that the vectors $v_1,\dots,v_5$ form an $\R$-basis for $V(z)$, that $v_6 = x v_1 + \frac{y^2}{x} v_2 + \frac{1}{x} v_3 + y v_4 - \frac{y}{x} v_5$, and that $L^{-1}(O_A)$ is the closure of $\Z^5 + \Z u$ in $\R^5$ where $u = (x, y^2/x, 1/x, y, -y/x)$. By assumption, $1,x,y$ are $\Q$-linearly independent since $A$ is of type $1$, so the projection of $L^{-1}(O_A)$ onto the $1$-st and $4$-th coordinates is all of $\R^2$. We have
\be
\pi_1(L(t_1,\dots,t_5)) = t_1 z_1 + t_4 z_2 ,
\ee
and since $z_1$ and $z_2$ are nonparallel, their $\R$-span is all of $\C$. Since every pair $(t_1,t_4) \in \R^2$ arises from a vector in $L^{-1}(O_A)$, we have $\pi_1(O_A) = \C$.

We now have that for any $t_1,t_2 \in \R$, letting $w = t_1 z_1 + t_2 z_2 \in \C$, there is $p^\pr \in O_A(p)$ such that $p^\pr(b_3) = w$. Moreover, $p^\pr(a_1) = z_1$ and $p^\pr(a_2) = z_2$ since the $U_A$-action fixes the $A$-periods. Let $A^\pr = \Span_\Z \{a_1,a_2,b_3\}$. For generic $t_1,t_2 \in \R$, we have that $1,t_1,t_2,t_1^2,t_1 t_2,t_2^2$ are $\Q$-linearly independent, since each of the countably many possible $\Q$-linear relations only holds on the zero set of a nonzero quadratic polynomial in $t_1,t_2$. Then by definition, this means $A^\pr$ has $p^\pr$-dimension $5$.
\end{proof}

\begin{lem} \label{lem:UAdim2type2}
Suppose that $A = \Span_\Z\{c_1,c_2,c_3\}$ has $p$-dimension $2$ and is of $p$-type $2$. If $\Lam(p) = \C$, then there is $p^\pr$ in the $U_A$-orbit closure of $p$ and $A^\pr = \Span_\Z\{c_1^\pr,c_2^\pr,c_3^\pr\}$ such that $A^\pr$ has $p^\pr$-dimension at least $3$ and $p^\pr$-type $1$.
\end{lem}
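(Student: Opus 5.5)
The plan is to mimic the proof of Lemma \ref{lem:UAtype1}: use the $2$-dimensional $U_A$-orbit closure to move one $B$-period continuously, and then pick a new isotropic triple consisting of two of the $A$-elements and that $B$-element. As before, we may assume $c_j = a_j$ for $1 \le j \le 3$, and we write $z = (z_1,z_2,z_3)$ with $z_3 = x z_1 + y z_2$. Since $A$ has type $2$, after swapping we may assume $y = r + s x$ with $r,s \in \Q$, and by Lemma \ref{lem:dzcases} the number $x$ has degree at least $3$ over $\Q$.

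\textbf{Step 1: compute the orbit closure.} Substituting $y = r + sx$ into $u = (x, y^2/x, 1/x, y, -y/x)$ and multiplying by $x$, the $\Q$-span of $1$ and the entries of $u$ is spanned by $1, x, 1/x$ (equivalently $x, x^2, 1$). This lets us write down the rank-$3$ relation module $R(u)$ explicitly. From it we get the $2$-dimensional subspace $O_0(u) \subset \R^5$, and hence the plane $P = L(O_0(u)) \subset V(z)$ with $w + P \subset O_A(p)$.

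\textbf{Step 2: choose a moving coordinate.} For $j = 1,2,3$, let $\pi_j$ be the projection $p_0 \mapsto p_0(b_j)$, as in Lemma \ref{lem:UAtype1}. Each $\pi_j(L(t))$ is an explicit $\R$-linear combination of two of the $z_k$; for example $\pi_1(L(t)) = t_1 z_1 + t_4 z_2$ and $\pi_3(L(t)) = t_3 z_3 + t_5 z_2$. Fix an index $j$ and the two $A$-elements $a_k, a_l$ with $k,l \ne j$. Writing $p'(b_j) = x' z_k + y' z_l$, the pair $(x',y')$ ranges over an affine set $Q_j = q_j + \pi_j'(O_0(u))$ in $\R^2$ as $p'$ ranges over $w + P$. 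The new triple $A' = \Span_\Z\{a_k, a_l, b_j\}$ is isotropic.

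\textbf{Step 3: generic choice.} The condition that $1, x', y'$ are $\Q$-linearly dependent defines a countable union of rational affine lines in $\R^2$. The condition that $\Q(x',y')$ is a cubic field picks out only countably many points. So if $Q_j$ is $2$-dimensional, or is a line that is not a rational affine line, then a generic $p' \in w + P$ gives $1, x', y'$ independent and $\Q(x',y')$ not cubic. By Lemma \ref{lem:dzcases}, $A'$ then has $p'$-type $1$ and $p'$-dimension at least $3$.

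\textbf{Step 4 (the main obstacle): rule out the degenerate case.} It remains to exclude the possibility that, for every $j$, the set $Q_j$ is a point or lies in a rational affine line. I would first check directly from the explicit $O_0(u)$ in Step 1 whether some projection, most likely $\pi_1$ or $\pi_3$, already has $2$-dimensional image. If not, the degenerate case forces rational affine relations among the coordinates $t$ along the whole affine plane through the point $L^{-1}(w)$. These relations would say that $p(b_j)$ lies in $\Q$-combinations of $z_1, z_2, x z_1$ up to a fixed rational affine shift, for all three $j$. Combined with $y = r + sx$, this would put $p(a_1), p(a_2), p(a_3), p(b_1), p(b_2), p(b_3)$ in a closed subgroup that is not all of $\C$ (for instance contained in $\R z_1 + \Q z_2$ up to finite index), i.e. a subgroup of the shape $\R + i\Z$. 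One can then enlarge to the remaining basis elements: other isotropic triples can be swapped in using the same symplectic basis, as in Lemma \ref{lem:nondiscreteLag3}. This contradicts $\Lam(p) = \C$, and it is the only place the hypothesis $\Lam(p) = \C$ enters. Making this contradiction precise, in particular handling the $b_j$ with $j > 3$, is where I expect the real work to lie.
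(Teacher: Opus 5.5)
There is a genuine gap in Step 4: the contradiction with $\Lam(p)=\C$ that you aim for does not exist in general.

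\textbf{Why the contradiction fails.} Carry out Step 1 explicitly with $y=r+sx$. You get $O_0(u)=\R(1,s^2,0,s,0)+\R(0,r^2,1,0,-r)$, and each of $\pi_1,\pi_2,\pi_3$ maps $L(O_0(u))$ onto the same real line $\ell=\R(z_1+sz_2)$. This $\ell$ is the identity component of $\ol{p(A)}=\ell+\Z z_0$. So no projection has $2$-dimensional image, and your first hope fails. Each $Q_j$ is a line whose direction is rational in the relevant coordinates: $(1,s)$, $(r,s)$, $(-r,1)$ for $j=3,2,1$. Moreover, $Q_j$ is a rational affine line exactly when $p(b_j)\in\ell+\Q z_0$. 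Your degenerate case is therefore precisely $p(b_1),p(b_2),p(b_3)\in\ell+\Q z_0$.

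For $g=3$ this does contradict $\Lam(p)=\C$. For $g\ge 4$, however, it is perfectly compatible with $\Lam(p)=\C$, since only the values on $\Span_\Z\{a_1,a_2,a_3,b_1,b_2,b_3\}$ are constrained and $p(a_4)$, say, is arbitrary. Worse, in this case every $p'\in O_A(p)$ takes all its values on that span inside $\ell+\Q z_0$. Any triple inside the span then generates a subgroup of some $\ell+\frac{1}{N}\Z z_0$, which is not dense, so no such triple can be of $p'$-type $1$. The degenerate case therefore cannot be ruled out; it must be handled by building a new triple that reaches outside this span. Your remark about swapping in other triples ``as in Lemma~\ref{lem:nondiscreteLag3}'' does not supply this. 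For basis triples such as $\{a_1,a_4,b_3\}$ you would also have to check that the moving line is not accidentally rational in the new coordinates.

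\textbf{The missing idea.} Modify the moving element by something $U_A$ does not see.
\begin{enumerate}
\item Use $\Lam(p)=\C$ to pick $h\in H_1$ with $p(h)\notin\ell+\Q z_0$.
\item Subtract its $b_1,b_2,b_3$-components to get $h_0\in A^\perp$. In the degenerate case you still have $p(h_0)\notin\ell+\Q z_0$.
\item Because $h_0\in A^\perp$, the triple $\{a_1,a_2,b_3+h_0\}$ is isotropic.
\item Every element of $U_A$ fixes $h_0$, so $p'(b_3+h_0)=p'(b_3)+p(h_0)$ for all $p'\in O_A(p)$.
\item These values sweep out the line $p(b_3)+p(h_0)+\ell$. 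In the $(z_1,z_2)$-coordinates it still has the rational direction $(1,s)$, but now its offset is irrational, so it is not a rational affine line.
\end{enumerate}
Your Step 3 then applies verbatim to $A'=\Span_\Z\{a_1,a_2,b_3+h_0\}$. This is how the paper argues. Your Step 3 is correct as written, and its extra avoidance of the countably many cubic points, which gives $p'$-dimension at least $3$, is slightly more careful than the paper.
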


\begin{proof}
Again, we may assume $c_j = a_j$ for $1 \leq j \leq 3$. Let $z_j = p(a_j)$ for $1 \leq j \leq 3$, $z = (z_1,z_2,z_3)$, and write $z_3 = x z_1 + y z_2$ with $x,y \in \R$. Since $A$ has $p$-dimension $2$ and is of type $2$, Lemma \ref{lem:dzcases} implies that $1,x,y$ are $\Q$-linearly dependent and that at least one of $x,y$ is irrational. Then there is a $\Q$-linear relation
\be
q_0 + q_1 x + q_2 y = 0
\ee
and this relation is unique up to scaling by $\Q^\ast$. With notation as in the proof of Lemma \ref{lem:UAtype1}, the argument from the proof of Lemma \ref{lem:UAtype1} gives us that each $\pi_j(O_A(p))$ contains the real line $\ell \subset \C$ given by the identity component of the closure of $\Z z_1 + \Z z_2 + \Z z_3$ in $\C$.

Since $A$ is of $p$-type $2$, we have $\ol{p(A)} = \ell + \Z z_0$ for some $z_0 \in \C \sm \ell$. Since $\Lam(p) = \C$, there is $h \in H_1$ such that $p(h) \notin \ell + \Q z_0$. If $p(b_j) \notin \ell + \Q z_0$ for some $1 \leq j \leq 3$, then after permuting the pairs $(a_j,b_j)$, we may assume that $p(b_3) \notin \ell + \Q z_0$. If not, then after subtracting an appropriate element of $\Z b_1 + \Z b_2 + \Z b_3$ from $h$ to get $h_0 \in A^\perp$, and replacing $b_3$ with $b_3 + h_0$, we again get $p(b_3) \notin \ell + \Q z_0$.

The set of $x z_1 + y z_2$ such that $1,x,y$ are $\Q$-linearly dependent forms a countably union of affine lines in the plane, and these lines are all distinct from $p(b_3) + \ell$ since $p(b_3) \notin \ell + \Q z_0$. Thus, each affine lines meets $p(b_3) + \ell$ in at most a single point. Thus, we can choose $p^\pr \in O_A$ such that $p^\pr(b_3) \in p(b_3) + \ell$ is not in any of these countably many affine lines, and then $A^\pr = \Span_\Z \{a_1,a_2,b_3\}$ is of $p^\pr$-type $1$.
\end{proof}

We are now ready to conclude our bootstrap argument.

\begin{proof} (of Theorem \ref{thm:Kapovich} when $\Lam(p)$ is not discrete.)

Fix $p \in V$ such that $\Lam(p)$ is not discrete. By scaling, we may assume $\vol(p) = 1$. By Lemma \ref{lem:nonparallelbasis}, there is a symplectic basis $\{a_j,b_j\}_{j=1}^g$ on which $p$ takes pairwise nonparallel values. By Lemma \ref{lem:nondiscreteLag3}, after permuting and negating the elements of this symplectic basis if necessary, we may assume that $p(a_1),p(a_2),p(a_3)$ generate a nondiscrete subgroup of $\C$. By Lemma \ref{lem:UAdim1}, after replacing $p$ with an element of its $U_A$-orbit closure, where $A = \Span_\Z\{a_1,a_2,a_3\}$, and permuting our symplectic basis again, we may assume that $d(p(a_1),p(a_2),p(a_3)) \geq 2$ and that $A$ is of $p$-type $2$ if equality holds. There are now two cases to consider. \\

\paragraph{\bf Case 1.} $\Lam(p) = \C$. If $d(p(a_1),p(a_2),p(a_3)) = 2$, then $A$ is of $p$-type $2$, and by Lemma \ref{lem:UAdim2type2}, after replacing $p$ with an element of its $U_A$-orbit closure and permuting our symplectic basis again, we may assume $A$ and $p$-type $1$. Finally, by Lemma \ref{lem:UAtype1}, after replacing $p$ with an element of its $U_A$-orbit closure and permuting our symplectic basis again, we may assume that $A$ has the maximum possible $p$-dimension of $5$.

We can now perturb $p(b_1),p(b_2),p(b_3)$ arbitrarily while preserving $\sum_{j=1}^3 \im(\ol{p(a_j)}p(b_j))$ and remain in the $U_A$-orbit closure of $p$. In particular, we can arrange so that $B_j = \Span_\Z \{b_1,b_2,b_j\}$ has $p$-dimension $5$ for all $3 \leq j \leq g$. Then by iterating this process to modify the $A$-periods of $p$, we get that the $\Sp(H_1)$-orbit closure of $p$ contains the entire affine slice $W(w)$ where $w = (p(b_1),\dots,p(b_g))$. Iterating again to modify the $B$-periods of $p$ similarly, and then one more time to modify the $A$-periods of $p$ again, we conclude that the $\Sp(H_1)$-orbit of $p$ is dense in $V_1$. \\

\paragraph{\bf Case 2.} $\Lam(p) \cong \R + i\Z$. We may assume $\Lam(p) = \R + i\Z$. Since $A$ must be of $p$-type $2$, we must have $d(p(a_1),p(a_2),p(a_3)) = 2$. In this case, we can perturb the real parts of $p(b_1),p(b_2),p(b_3)$ arbitrarily while preserving $\sum_{j=1}^3 \im(\ol{p(a_j)}p(b_j))$ and remain in the $U_A$-orbit closure of $p$. Following the iterative argument from the end of Case 1, we get that we can perturb the real parts of all of the $p(a_j),p(b_j)$ while preserving $\vol(p)$ and remain in the $\Sp(H_1)$-orbit closure of $p$. To show that $\Sp(H_1) \cdot p$ is dense in $V_1^{\R + i\Z}$, it is now enough to show that every tuple $(m_1,\dots,m_g,n_1,\dots,n_g) \in \Z^{2g}$ with $\gcd(m_j,n_j)_{j=1}^g = 1$ arises from the imaginary parts of $p^\pr(a_j),p^\pr(b_j)$ for some $p^\pr \in \Sp(H_1) \cdot p$. This latter statement follows from the fact that $\Sp(2g,\Z)$ acts transitively on primitive vectors in $\Z^{2g}$, thus $\Sp(H_1) \cdot p$ is dense in $V_1^{\R + i\Z}$.
\end{proof}

\paragraph{\bf Navigating $U_A$-orbits within intersections of half-spaces.} We conclude this section with an important technical lemma that will allow us to use the previous bootstrapping lemmas from this section in later arguments about closures of absolute period leaves.

To set up, suppose $g \geq 3$, and fix $z = (z_1,\dots,z_g) \in \C^g$ such that the components $z_1,\dots,z_g$ are pairwise nonparallel. For $\rho = (\rho_1,\dots,\rho_g) \in \R_{> 0}^g$, define
\be
W(z)_\rho = \{(w_1,\dots,w_g) \in W(z) : \im(\ol{z_j} w_j) > \rho_j \text{ for } 1 \leq j \leq g \} .
\ee
Recall that $U_A$ acts on $W(z)$ by shears $s_j : w \ra w + z_j e_j$, $1 \leq j \leq g$, and factor mixes $m_{jk} : w \ra w + z_k e_j + z_j e_k$, $1 \leq j < k \leq g$. Relabel these generators by $u_1,\dots,u_n$, where $n = g(g+1)/2$.

\begin{lem} \label{lem:UAhalfspaces}
Suppose there are $\eps > 0$ and $\rho = (\rho_1,\dots,\rho_g) \in \R_{>0}^g$ with $0 < \eps < \min_j \rho_j$, such that $|\im(\ol{z}_j z_k)| < \eps$ for all $1 \leq j < k \leq g$. Define $\rho^\pr = (\rho_1 - \eps, \dots, \rho_g - \eps) \in \R_{>0}^g$. For any $w_0 \in W(z)_\rho$ and any $u \in U_A$ such that $u \cdot w_0 \in W(z)_\rho$, there is a sequence $u_{j_1}^{s_{j_1}},\dots,u_{j_N}^{s_{j_N}}$ with $s_{j_k} \in \{\pm 1\}$ for $1 \leq k \leq N$, such that the sequence $w_k = u_{j_k}^{s_{j_k}} \cdot w_{k-1}$, $1 \leq k \leq N$, satisfies $w_N = u \cdot w_0$ and $w_k \in W(z)_{\rho^\pr}$ for all $1 \leq k \leq N$.
\end{lem}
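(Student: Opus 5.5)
We track the quantities $f_j(w) = \im(\ol{z_j} w_j)$ along the path. The key observation is how each generator changes them. The shear $s_j^{\pm 1}$ changes $w_j$ by $\pm z_j$, so $f_j$ changes by $\pm\im(\ol{z_j}z_j) = 0$, and the other coordinates are untouched. Thus shears leave every $f_l$ unchanged. The factor mix $m_{jk}^{\pm 1}$ changes $w_j$ by $\pm z_k$ and $w_k$ by $\pm z_j$. Hence $f_j$ changes by $\pm\im(\ol{z_j}z_k)$ and $f_k$ by $\pm\im(\ol{z_k}z_j) = \mp\im(\ol{z_j}z_k)$, and these two changes are opposite. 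In particular each generator step changes each $f_l$ by less than $\eps$ in absolute value.

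Since $U_A$ is free abelian, write $u = \prod_j s_j^{\alpha_j} \prod_{j<k} m_{jk}^{\beta_{jk}}$. Only the exponents $\beta_{jk}$ affect the $f_l$. Explicitly, $f_l(u\cdot w_0) - f_l(w_0) = \sum_{k \neq l} \beta_{lk}\,\im(\ol{z_l}z_k)$, where $\beta_{lk} = \beta_{kl}$. We use that the generators commute and order the word freely. First we apply all shears in any order; this keeps every $f_l$ equal to its value at $w_0$, which is $> \rho_l$. Then we apply the factor mixes one at a time, choosing the order greedily so that the path stays in $W(z)_{\rho'}$.

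The main obstacle is designing this order of the factor mix steps so that no $f_l$ drops below $\rho_l - \eps$. The endpoints satisfy $f_l > \rho_l$, so it suffices to keep each $f_l(w_k)$ at least $\min(f_l(w_0), f_l(u\cdot w_0)) - \eps$ throughout. A single step moves each $f_l$ by less than $\eps$. It is therefore enough to arrange that each $f_l$ varies essentially monotonically between its initial and final values, overshooting by at most one step.

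My first attempt is a discrete ``rounding'' schedule. Parametrize the straight path $t \mapsto (t\beta_{jk})_{j<k}$ for $t \in [0,1]$. At time $t$ the value $f_l$ is the affine interpolation $(1-t) f_l(w_0) + t f_l(u\cdot w_0)$, which stays $> \rho_l$. Approximate this path by a lattice path in $\Z^{\binom g2}$ that stays within $\ell^\infty$-distance $1$ of the line, with each coordinate moving monotonically. The deviation of $f_l$ from the interpolation is then bounded by a sum over $k$ of $|\im(\ol{z_l}z_k)|$. That sum can be as large as $(g-1)\eps$, which is too crude.

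To fix this, I would refine the schedule. Advance the coordinates $\beta_{jk}$ in rounds. In each round, each nonzero coordinate moves by one unit toward its target, in proportion to its size, as in a Bresenham-style interleaving. Within a round, order the steps so that for each $l$ the steps increasing $f_l$ precede those decreasing it, whenever $f_l$ is near its lower bound. If this bookkeeping proves delicate, I would fall back on the following inductive variant. Since each $f_l$ depends only on the $\beta_{lk}$, process the indices $l$ one at a time. Use the pair $(j,k)$ for which $f_j$ and $f_k$ are respectively the quantity with the most room above its bound and the one needing to be raised. Take a single step moving room from $f_j$ to $f_k$; this changes the pair by opposite amounts less than $\eps$. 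Repeat until the remaining word is empty. This works because every step is a transfer of less than $\eps$ between two coordinates, and at each step a coordinate is only lowered when it currently exceeds its target. Making that last claim rigorous is the crux: show that such a greedy transfer order always exists consistent with the fixed multiset of exponents $\beta_{jk}$.
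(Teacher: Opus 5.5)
\textbf{There is a genuine gap: the existence of a safe ordering is never proved.} Your setup is correct and agrees with the paper. Shears leave every $f_l=\im(\ol{z_l}w_l)$ unchanged, and a factor mix $m_{jk}^{\pm1}$ moves an amount of absolute value less than $\eps$ from one of $f_j,f_k$ to the other. So you may apply the shears first, and the lemma reduces to ordering the remaining factor-mix steps. But that ordering is the entire content of the lemma, and you never establish it:
\begin{itemize}
\item the rounding schedule fails by your own estimate;
\item the Bresenham-style refinement is never specified;
\item in the greedy fallback you explicitly defer ``the crux''.
\end{itemize}
The greedy rule as stated also does not work. The pair $(j,k)$ with ``most room'' and ``most need'' need not correspond to any remaining step $m_{jk}^{\sigma}$ of the correct sign, since its exponent may be $0$ or of the wrong sign. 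The invariant ``a coordinate is only lowered when it exceeds its target'' also cannot always be maintained. The current values can already equal the final values while the remaining exponent vector is a nonzero integer relation among the $\im(\ol{z}_j z_k)$. This happens, e.g., for a small multiple of $z=(1,i,1+i)$ with exponents $(\beta_{12},\beta_{13},\beta_{23})=(1,-1,-1)$. Then every step lowers some coordinate below its target.

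\textbf{The missing idea is a subset-sum argument, which is what the paper uses.} Induct on $M=\sum_{j<k}|\beta_{jk}|$ for the remaining word, keeping the current point $w$ in $W(z)_{\rho^\pr}$.
\begin{itemize}
\item If $w\in W(z)_\rho$, any remaining step keeps you in $W(z)_{\rho^\pr}$.
\item Otherwise let $J=\{l : f_l(w)\le\rho_l\}$, which is nonempty. Since $f_l(u\cdot w_0)>\rho_l$, the remaining steps must strictly increase $\sum_{l\in J}f_l$.
\item Steps with both indices in $J$, or both outside $J$, do not change this sum. So some remaining step $m_{jk}^{\sigma}$, with sign agreeing with its pending exponent, raises $f_j$ for some $j\in J$ and lowers $f_k$ for some $k\notin J$.
\item Since $k\notin J$ means $f_k>\rho_k$, after the step $f_k>\rho_k-\eps$. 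All other coordinates are raised or unchanged, and $M$ drops by one.
\end{itemize}

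Your target-based idea can be repaired the same way. Let $K=\{l : f_l>f_l(u\cdot w_0)\}$. If $K\neq\emptyset$, the same sum argument gives a remaining cross step that lowers some $f_k$ with $k\in K$. That $f_k$ lies above its target, hence above $\rho_k$, so it stays above $\rho_k-\eps$. If $K=\emptyset$, then conservation of $\sum_l f_l$ forces $f$ to equal its target. All $f_l$ then exceed $\rho_l$, so any step is safe.
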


In other words, Lemma \ref{lem:UAhalfspaces} says that if the signed areas $\im(\ol{z}_j z_k)$ are sufficiently small, then we can connect any two elements in the same $U_A$-orbit in the product of half-spaces $W(z)_\rho$ by applying a sequence of generators of $U_A$ and their inverses, while staying within a slightly larger product of half-spaces $W(z)_{\rho^\pr}$.

\begin{proof}
Since $U_A$ is free abelian, there are integers $c_j$, $1 \leq j \leq g$, and integers $c_{jk}$, $1 \leq j < k \leq g$, such that
\be
u \cdot w_0 = \prod_{1 \leq j \leq g} s_j^{c_j} \prod_{1 \leq j < k \leq g} m_{jk}^{c_{jk}} \cdot w_0 .
\ee
For $1 \leq j \leq g$ and $w \in W(z)$, define $A_j(w) = \im(\ol{z}_j w_j)$ where $w_j$ is the $j$-th component of $w$. By definition, $w \in W(z)_\rho$ if and only if $A_j(w) > \rho_j$ for $1 \leq j \leq g$ and $\sum_{j=1}^g A_j(w) = 1$. The shears $s_j$ do not change any of the $A_k(w)$, so by replacing $w_0$ with $\prod_{1 \leq j \leq g} s_j^{c_j} \cdot w_0$, we may assume $c_j = 0$ for $1 \leq j \leq g$. The factor mixes $m_{jk}$ only change $A_j(w)$ by $\im(\ol{z}_j z_k)$ and $A_k(w)$ by $-\im(\ol{z}_j z_k)$, and these changes have absolute value less than $\eps$. Thus, applying any single factor mix $m_{jk}^{\pm 1}$ to $w_0$ is guaranteed to stay within $W(z)_{\rho^\pr}$.

We may now assume that $w_0 \in W(z)_{\rho^\pr} \sm W(z)_\rho$ and that
\be
u \cdot w_0 = \prod_{1 \leq j < k \leq g} m_{jk}^{c_{jk}} \cdot w_0 \in W(z)_\rho .
\ee
We will argue by induction on $M = \sum_{1 \leq j < k \leq g} |c_{jk}|$. If $M = 0$ then $w_0 = u \cdot w_0$ and there is nothing to prove. Suppose $M > 0$. Since $w_0 \notin W(z)_\rho$, the set
\be
J = \{1 \leq j \leq g : A_j(w_0) \leq \rho_j\}
\ee
is nonempty, and since $u \cdot w_0 \in W(z)_\rho$, we have $A_j(w_0) < A_j(u \cdot w_0)$ for all $j \in J$. Consider the set of ``pending'' factor mixes
\be
\{m_{jk}^{\sigma_{jk}} : 1 \leq j < k \leq g, \; \sigma_{jk} = \pm 1, \; c_{jk} \cdot \sigma_{jk} > 0\} .
\ee
These are the factor mixes (with powers $\pm 1$) that when applied to $w_0$ decrease $M$ by $1$. Each factor mix decreases some $A_j(w_0)$ and increases some $A_k(w_0)$ by the same amount. Since $\sum_{j \in J} A_j(w_0) < \sum_{j \in J} A_j(u \cdot w_0)$, at least one of these pending factor mixes must increase some $A_j(w_0)$ with $j \in J$, and decrease some $A_k(w_0)$ with $k \notin J$. Applying this pending factor mix decreases $M$ to $M - 1$, increases some $A_j(w_0) \in (\rho_j - \eps, \rho_j]$, decreases some $A_k(w_0) > \rho_k$ by at most $\eps$, and leaves the other $A_i(w_0)$ fixed. Thus, $m_{jk}^{\sigma_{jk}} \cdot w_0 \in W(z)_{\rho^\pr}$ and we are done by induction.
\end{proof}


\section{Absolute Period Foliations and Saddle Connection Configurations} \label{sec:configurations}

We gather background material on strata of holomorphic $1$-forms and their absolute period foliations. We refer to \cite{Zor:flat} for further background material. We then prove some preliminary results about configurations of saddle connections on holomorphic $1$-forms in the strata we will focus on in the next section. \\

\paragraph{\bf Strata of holomorphic $1$-forms.} Let $(X,\om)$ denote a closed Riemann surface $X$ with a holomorphic $1$-form $\om \neq 0$. The set of zeros $Z(\om)$ is finite, and the orders of the zeros form a partition of $2g - 2$, where $g$ is the genus of $X$. Integrating $\om$ away from its zero set provides an atlas of charts into $\C$ whose transition maps are translations. In particular, $\om$ induces a singular flat metric on $X$ with cone points of angle $2\pi(k+1)$ at zeros of order $k \geq 0$. An oriented geodesic segment $\gam$ on $(X,\om)$ is a {\em saddle connection} if it starts and ends in $Z(\om)$ and its interior is contained in $X \sm Z(\om)$, and the {\em holonomy} of $\gam$ is the complex number $\int_\gam \om$. We let $\ol{\gam}$ denote the saddle connection obtained by reversing the orientation of $\gam$, and similarly for oriented closed geodesics. A {\em cylinder} on $(X,\om)$ is a maximal connected open subset of $X$ foliated by parallel closed geodesics. The boundary of a cylinder is a finite union of parallel saddle connections.

Fix an integer $g \geq 2$, and let $\kap = (k_1,\dots,k_n)$ be an unordered partition of $2g - 2$. The {\em stratum} $\Om\cM_g(\kap)$ is the space of all holomorphic $1$-forms on closed genus $g$ Riemann surfaces with exactly $n$ distinct zeros whose orders are given by $\kap$. Strata are complex orbifolds of dimension $2g + n - 1$, and also complex algebraic varieties. Local {\em period coordinates} on a neighborhood in $\Om\cM_g(\kap)$ are given by integrating $1$-forms over a basis for the relative homology group $H_1(X,Z(\om);\Z)$.

Strata can have up to $3$ connected components, and these components were classified in \cite{KZ:components} based on a spin invariant and hyperellipticity. For our purposes, we will only need the following corollary of their classification.

\begin{cor} \label{cor:KZ} (\cite{KZ:components}, Theorems 1-2)
Suppose $g \geq 3$. The stratum $\Om\cM_g(\kap)$ is connected if and only if $\kap$ has at least one odd part and $\kap \neq (g - 1, g - 1)$. Moreover, if $g - 1$ is odd, the stratum $\Om\cM_g(g - 1, g - 1)$ has a unique nonhyperelliptic component.
\end{cor}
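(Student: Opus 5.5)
This statement is a direct reformulation of the Kontsevich--Zorich classification, so I will simply read it off from Theorems 1--2 of \cite{KZ:components} rather than prove anything new. For $g \geq 4$, their Theorem 1 says there are exactly two kinds of extra components. First, hyperelliptic components occur only for $\kap = (2g-2)$ and $\kap = (g-1,g-1)$. Second, when every part of $\kap$ is even, the remaining forms split further into an even and an odd spin component. Their Theorem 2 gives the genus $3$ picture. The strata $\Om\cM_3(4)$ and $\Om\cM_3(2,2)$ each have exactly two components, one hyperelliptic and one odd spin. Every other stratum in genus $3$ is connected.

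For the forward direction, I take $\kap$ with an odd part and $\kap \neq (g-1,g-1)$. The spin parity is only defined when all parts are even, so there is no spin splitting. Since $2g-2$ is even, $\kap = (2g-2)$ has no odd part, so an odd part rules out that case too, and hence there is no hyperelliptic component. Thus $\Om\cM_g(\kap)$ is connected.

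For the converse, suppose all parts of $\kap$ are even. If $g \geq 4$, Theorem 1 of \cite{KZ:components} gives at least two spin components. If $g = 3$, the only possibilities are $(4)$ and $(2,2)$, and each has two components by Theorem 2. If instead $\kap = (g-1,g-1)$, the stratum has a hyperelliptic component. This component is proper, because the dimension of $\Om\cM_g(g-1,g-1)$ is $2g+1$, which exceeds that of its hyperelliptic locus for $g \geq 3$, so the stratum is disconnected.

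For the last sentence, take $g-1$ odd. Then $\kap = (g-1,g-1)$ has odd parts, so there is no spin splitting. The nonhyperelliptic forms therefore make up a single component, and there is exactly one hyperelliptic component. I do not expect any step to be a real obstacle. The only care needed is the genus $3$ bookkeeping, where Theorem 2 of \cite{KZ:components} must be quoted separately because the spin-parity statement there differs from the $g \geq 4$ case.
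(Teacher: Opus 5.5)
Your plan of reading the statement off Theorems 1--2 of \cite{KZ:components} is what the paper does; it offers no argument beyond the citation. Your case bookkeeping for the all-even strata and for genus $3$ is correct.

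The one piece of independent reasoning you add is false, however. You argue that the hyperelliptic component of $\Om\cM_g(g-1,g-1)$ is not the whole stratum because $\dim_\C \Om\cM_g(g-1,g-1) = 2g+1$ exceeds the dimension of its hyperelliptic locus. A connected component is open in the stratum, so it has the full dimension $2g+1$.

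Concretely, quotienting by the hyperelliptic involution identifies $\Om\cM_g^{\mathrm{hyp}}(g-1,g-1)$ locally with the stratum $\mathcal{Q}(2g-2,-1^{2g+2})$ of quadratic differentials on $\mathbb{P}^1$. That stratum has dimension $(2g+3)-2 = 2g+1$. Equivalently, over the $(2g-1)$-dimensional hyperelliptic locus in $\cM_g$, each curve $y^2 = f(x)$ carries the two-parameter family $c\,(x-x_0)^{g-1}\,dx/y$ of forms in this stratum. So no dimension count can show that the stratum contains a nonhyperelliptic form. That existence is part of what Kontsevich--Zorich actually prove.

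To fix this, drop the dimension argument and quote Theorem 1 of \cite{KZ:components} directly. For $g-1 = 2l-1$ odd, it states that $\Om\cM_g(2l-1,2l-1)$ has exactly two components, one hyperelliptic and one not. This gives both the disconnectedness needed in your converse and the final sentence of the corollary. When $g-1$ is even, $(g-1,g-1)$ has only even parts and is already handled by your spin argument, so the hyperelliptic component plays no role there.
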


The stratum $\Om\cM_g(\kap)$ admits a holomorphic {\em absolute period foliation}, whose leaves are obtained by varying a $1$-form $(X,\om)$ while keeping the integrals $\int_\gam \om$ fixed for all closed loops $\gam$. Absolute period leaves are immersed complex suborbifolds of complex dimension $n - 1$. When $n = 1$, leaves are just points. For any holomorphic $1$-form $(X,\om)$, we denote the {\em absolute periods} of $\om$ by $\Per(\om)$, that is, the set of integrals $\int_\gam \om$ where $\gam$ is a closed loop on $X$. We denote by $\Lam(\om)$ the closure of $\Per(\om)$ in $\C$. Note that $\Per(\om) \subset \C$ is an additive subgroup, $\Lam(\om) \subset \C$ is a closed additive subgroup, and both subgroups are constant along absolute period leaves.

The {\em area} of $(X,\om)$ is given by $\Area(X,\om) = \sum_{j=1}^g \im\left(\int_{a_j} \ol{\om} \int_{b_j} \om\right)$, where $\{a_j,b_j\}_{j=1}^g$ is any symplectic basis for $H_1(X;\Z)$. Note that area is constant along absolute period leaves. For $a > 0$, define
\be
\Om_a\cM_g(\kap) = \{(X,\om) \in \Om\cM_g(\kap) : \Area(X,\om) = a\} .
\ee
Since $\Area(X,\om) > 0$, the absolute periods $\Per(\om)$ always contain a lattice in $\C$. For $\Lam \subset \C$ a closed subgroup containing a lattice, we additionally define
\be
\Om_a^\Lam\cM_g(\kap) = \{(X,\om) \in \Om_a\cM_g(\kap) : \Lam(\om) \subset \Lam \text{ and meets every component of } \Lam\} .
\ee
The subspaces $\Om_a^\Lam\cM_g(\kap) \subset \Om\cM_g(\kap)$ are closed real-analytic suborbifolds.

Let $\GL^+(2,\R)$ be the group of real $2$-by-$2$ matrices with positive determinant. The $\R$-linear action of $\GL^+(2,\R)$ on $\C$ induces an action on strata, by postcomposition with charts on $X \sm Z(\om)$ arising from integration of $\om$. Orbit closures for the $\GL^+(2,\R)$-action on $\Om\cM_g(\kap)$ are complex algebraic subvarieties locally defined by real homogeneous linear equations in period coordinates \cite{EMM:closures}, \cite{Fil:splitting}. Following \cite{AW:highrank}, we will refer to $\GL^+(2,\R)$-orbit closures in strata as {\em invariant subvarieties}. \\

\paragraph{\bf Connected sums and bubbled handles.} We review two standard constructions for producing higher-genus holomorphic $1$-forms from lower genus ones, with a focus on constructing families that lie on a single absolute period leaf. We refer to \cite{EMZ:principal} and \cite{KZ:components} for more general constructions and discussions.

First, we discuss connected sums. Fix an integer $m \geq 2$. Let $(X_1,\om_1),\dots,(X_m,\om_m)$ be a cyclically ordered collection of holomorphic $1$-forms. Choose a nonzero $z \in \C$, and for each $1 \leq j \leq m$, choose an embedded oriented geodesic segment $\gam_j$ in $(X_j,\om_j)$ such that $z = \int_{\gam_j} \om_j$, so the segments $\gam_j$ are all parallel and of the same length. Note that the orientation of $\gam_j$ determines a left and right side of $\gam_j$. Let $p_j,q_j \in X_j$ be the starting point and ending point of $\gam_j$, respectively. The connected sum construction goes as follows. Slit each $X_j$ along $\gam_j$, and then glue the left side of $\gam_j$ to the right side of $\gam_{j+1}$, indices taken modulo $m$. The result is a (connected) holomorphic $1$-form $(X,\om)$ of genus $g = \sum_{j=1}^m g_j$, where $g_j$ is the genus of $X_j$. The points $p_j$ are identified to a single point $p \in X$, and $p \in Z(\om)$ is a zero of order $(m - 1) + \sum_{j=1}^m k_j$, where $k_j$ is the order of $\om_j$ at $p_j$. Similarly for the points $q_j \in X_j$ and the resulting point $q \in X$.

We will be interested in families of connected sums where the starting $1$-forms $(X_j,\om_j)$ are fixed, while the slits vary. Choose a continuous path $z(t) \in \C \sm \{0\}$, $t \in [0,1]$, and choose segments $\gam_j$ on $(X_j,\om_j)$ as above with $z(0) = \int_{\gam_j} \om_j$. For each $j$, the path $z(t)$ determines a continuous path of directions $\theta_j(t)$ emanating from the starting point $p_j$. We assume that for all $t \in [0,1]$ and all $j$, the $1$-form $(X_j,\om_j)$ has an embedded oriented geodesic segment $\gam_j(t)$ starting at $p_j$ in the direction of $\theta_j(t)$ with holonomy $z(t)$, and that the interior of $\gam_j(t)$ lies in $X_j \sm Z(\om_j)$. Then the connected sum construction produces a continuous path of $1$-forms $(X(t),\om(t))$. Moreover, since the integrals of $\om_j$ along closed loops in $X_j$ remain constant, and since $H_1(X(t);\Z) \cong \bigoplus_{j=1}^m H_1(X_j;\Z)$, the path $(X(t),\om(t))$ lies in an absolute period leaf.

Next, we discuss bubbled handles. Let $(X,\om)$ be a holomorphic $1$-form of genus $g$, and let $T = (\C/(\Z z_1 + \Z w_1), dz)$ be a flat torus. Let $\al \subset T$ be a closed geodesic with $z_1 = \int_\al dz$, and let $\gam$ be an embedded oriented geodesic segment in $(X,\om)$ with $z_1 = \int_\gam \om$ starting at $Z \in Z(\om)$ and ending at a point in $X \sm Z(\om)$. The bubbled handle construction goes as follows. Slit $X$ along $\gam$ and slit $T$ along $\al$. Then glue the left side of $\gam$ to the right side of $\al$, and glue the right side of $\gam$ to the left side of $\al$. The result is a holomorphic $1$-form $(X^\pr,\om^\pr)$ of genus $g + 1$. If $\om$ has a zero of order $k$ at $Z$, then the corresponding point on $(X^\pr,\om^\pr)$ is a zero of $\om^\pr$ of order $k + 2$.

We will be interested in the following families of bubbled handles. Suppose $(X(t),\om(t))$, $t \in [0,1]$, is a continuous path in an absolute period leaf. Choose a flat torus $T$ and a segment $\gam$ on $(X(0),\om(0))$ as above. The choice of $\gam$ determines a continuous path of directions $\theta(t)$ emanating from $Z$ on $(X(t),\om(t))$. Suppose that for all $t \in [0,1]$, the $1$-form $(X(t),\om(t))$ does not have a saddle connection starting at $Z$ in the direction of $\theta(t)$ with holonomy in $[0,1] \cdot z_1$. Then the bubbled handle construction yields a well-defined path $(X^\pr(t),\om^\pr(t))$ in another absolute period leaf (in a higher genus stratum). \\

\paragraph{\bf Multiple connected sum presentations.} We now present a criterion in terms of simple inequalities in certain periods for the existence of two presentations of a single $1$-form as a connected sum, where the associated saddle connections are related by a Dehn twist. Similar criteria are studied in \cite{CM:non-ergodic} and \cite{McM:SL2R}.

For nonzero complex numbers $z,w$ with $\im(\ol{z} w) > 0$, let
\be
P(z,w) = \{t_1 z + t_2 w : 0 < t_1,t_2 < 1\}, \quad \ol{P}(z,w) = \{t_1 z + t_2 w : 0 \leq t_1,t_2 \leq 1\}
\ee
be the open (resp. closed) parallelograms in $\C$ with vertices $0,z,z+w,w$ in counterclockwise order. Suppose $(X,\om)$ is a holomorphic $1$-form with an embedded open parallelogram $P$ bounded by saddle connections $\gam_1,\gam_2,\gam_3,\gam_4$ in counterclockwise order around $\partial P$. The saddle connections $\gam_j$ are not necessarily distinct. Letting $z = \int_{\gam_1} \om$, $w = -\int_{\gam_4} \om$, we have that $\im(\ol{z} w) = \Area(P) > 0$, and the identification $P(z,w) \cong P$ extends to a continuous surjection $\ol{P}(z,w) \ra \ol{P}$. For $0 \leq t_1,t_2 \leq 1$, let $P(t_1 z + t_2 w) \in \ol{P}$ be the image of $t_1 z + t_2 w$ under this surjection.

\begin{lem} \label{lem:sc12}
Let $(X,\om)$ be a holomorphic $1$-form with a pair of homologous saddle connections $\gam,\gam^\pr$. Suppose that for $j = 1,2$, we have a pair $\al_j,\al_j^\pr$ of closed geodesics on $(X,\om)$ such that $\al_j,\gam^\pr,\ol{\al}_j^\pr,\ol{\gam}$ bound an embedded open parallelogram $P_j$ and are in counterclockwise order around $\partial P_j$. Suppose also that $P_1$ and $P_2$ are disjoint, and that $\ol{P}_1 \cap \ol{P}_2 = \gam \cup \gam^\pr$. Let $c = \int_\gam \om = \int_{\gam^\pr} \om$, and for $j = 1,2$, let $z_j = \int_{\al_j} \om = \int_{\al_j^\pr} \om$. If
\be
-\im(\ol{z}_1 c) < \im(\ol{z}_1 z_2) < \im(\ol{z}_2 c) ,
\ee
then there is another pair of homologous saddle connections $\del,\del^\pr$ in $\ol{P}_1 \cup \ol{P}_2$ with
\be
z_1 + z_2 + c = \int_\del \om = \int_{\del^\pr} \om
\ee
and isotopic to a Dehn twist of $\gam,\gam^\pr$ along a simple closed piecewise-geodesic curve $\al_{12}$ in $\ol{P}_1 \cup \ol{P}_2$.
\end{lem}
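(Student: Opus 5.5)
Our plan is to work entirely inside the closed region $Q = \ol{P}_1 \cup \ol{P}_2$. We build explicit developing coordinates for it and find the new saddle connections as straight segments in those coordinates.

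\textbf{Step 1: develop $Q$.} By the hypotheses, $P_1$ is a copy of $P(z_1,c)$ whose sides are, in counterclockwise order, $\al_1$ (holonomy $z_1$), $\gam^\pr$ (holonomy $c$), $\ol{\al}_1^\pr$ and $\ol{\gam}$. So $\gam$ runs along the left side from $0$ to $c$, and $\gam^\pr$ runs along the right side from $z_1$ to $z_1+c$. The parallelogram $P_2$ is built the same way with $z_2$ in place of $z_1$. Since $\ol{P}_1 \cap \ol{P}_2 = \gam \cup \gam^\pr$, the two pieces are glued along $\gam^\pr$: the right side of $P_1$ is identified with the left side of $P_2$. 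Developing $P_2$ to the right of $P_1$ therefore gives an immersion of the planar region
\[
D = \ol{P}(z_1,c) \cup \big(z_1 + \ol{P}(z_2,c)\big) ,
\]
a hexagon with vertices $0,\ z_1,\ z_1+z_2,\ z_1+z_2+c,\ z_1+c,\ c$. Its right side $z_1+z_2+[0,1]c$ is a copy of $\gam$, because the right side of $P_2$ is $\gam$. The vertices are zeros of $\om$, and $D$ immersed in $X$ is a cylinder-like strip: the left edge is glued back to the right edge. The closed curve $\al_{12}$ is the image of the core path from $0$ to $z_1$ to $z_1+z_2$, whose holonomy is $z_1+z_2$.

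\textbf{Step 2: the segments $\del$ and $\del^\pr$.} In $D$, take $\del$ to be the straight segment from $0$ to $z_1+z_2+c$. Take $\del^\pr$ to be the segment with the same holonomy starting at a different vertex, namely the one obtained by cutting and regluing, e.g.\ starting at $-z_2$ in the translate of $D$ by $-(z_1+z_2)$, i.e.\ the segment from the top-left vertex region. Concretely, one of the two segments starts at the bottom-left zero and passes through the interior of $P_1$ and then $P_2$. The other starts at the top of $\gam^\pr$ after going around $\al_{12}$ in the universal cover of the strip; it is the same segment translated by $\gam \leftrightarrow \gam^\pr$ symmetry. Each segment must lie inside $D$ except at its endpoints. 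Since $D$ is a hexagon and $\del$ joins two opposite vertices, containment reduces to requiring that the interior vertices $z_1$ and $z_1+c$ lie on opposite sides of $\del$. These are the sign conditions
\[
\im\big(\ol{(z_1+z_2+c)}\, z_1\big) < 0 < \im\big(\ol{(z_1+z_2+c)}\,(z_1+c)\big).
\]
Expanding the first gives $\im(\ol{z}_2 z_1)+\im(\ol{c}z_1)<0$, i.e.\ $-\im(\ol{z}_1 c) < \im(\ol{z}_1 z_2)$. The second gives $\im(\ol{z}_1 z_2) < \im(\ol{z}_2 c)$, using $\im(\ol{z}_1 c)$ and $\im(\ol{z}_2 c)$ positive from the areas. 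This is exactly the hypothesis, and the same computation handles $\del^\pr$ by the symmetry of $D$ under the rotation swapping its two $c$-sides.

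\textbf{Step 3: homology and the Dehn twist.} Both segments have holonomy $z_1+z_2+c$. The union $\del\cup\ol{\del}^\pr$ bounds, inside $D$, the region whose boundary is made of $\gam$, $\gam^\pr$ and pieces of the $\al$ and $\al^\pr$ curves. This shows $\del$ and $\del^\pr$ are homologous, just as $\gam$ and $\gam^\pr$ are. Sliding the endpoints of $\del$ along the boundary of $D$ gives an isotopy rel zeros from $\del$ to the concatenation of $\gam$ with $\al_{12}$. This is the image of $\gam$ under the Dehn twist along $\al_{12}$, and likewise for $\del^\pr$ and $\gam^\pr$.

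The main obstacle is Step 2's bookkeeping: identifying correctly which vertices of $D$ are genuinely the same zero, and choosing $\del^\pr$ so that it is a distinct saddle connection rather than $\del$ itself. I would first settle this by drawing the strip in the universal cover. In that picture $\del$ and $\del^\pr$ appear as the two diagonals of the two "slanted" parallelograms into which $D$ splits.
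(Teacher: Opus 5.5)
Your proposal is correct and is essentially the paper's proof: your two sign conditions $\im(\ol{(z_1+z_2+c)}\,z_1)<0<\im(\ol{(z_1+z_2+c)}\,(z_1+c))$ on the developed hexagon are exactly the inequalities the paper checks, and you obtain $\del^\pr$ the same way the paper does, by swapping the roles of $P_1$ and $P_2$. The only difference is presentational: the paper verifies the halves $s_1\subset\ol{P}_1$ and $s_2\subset\ol{P}_2$ separately and computes the common crossing parameter $t_1=t_2$ on $\gam^\pr$, where you use a single line-side test.

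When writing it up, fix the following bookkeeping.
\begin{itemize}
\item $P_2$ is not ``built the same way'' as $P_1$. It has $\gam^\pr$ as its left side and $\gam$ as its right side, which is what you actually use and what disjointness of $P_1,P_2$ forces.
\item $\del^\pr$ starts at the start of $\gam^\pr$ (vertex $z_1$ of $D$), passes through $P_2$, crosses $\gam$, and passes through $P_1$ to the end of $\gam^\pr$ (vertex $z_1+c$).
\item $\del$ and $\del^\pr$ are sides, not diagonals, of the parallelograms $P(z_1,z_1+z_2+c)$ and $P(z_2,z_1+z_2+c)$ into which $\ol{P}_1\cup\ol{P}_2$ is recut. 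This recutting also gives $\del-\del^\pr=\al_1-\al_1^\pr=\gam-\gam^\pr=0$ in homology.
\item $\al_{12}$ should be the midline $P_1(tz_1+c/2)\cup P_2(tz_2+c/2)$. Your path $0\to z_1\to z_1+z_2$ runs along $\al_1\cup\al_2$ through a zero, so it is not a valid twist curve.
\end{itemize}
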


\begin{proof}
Since
\begin{align*}
\im(\ol{z}_1 (z_1 + z_2 + c)) &= \im(\ol{z}_1 z_2) + \im(\ol{z}_1 c) > 0 \\
\im((\ol{z_1 + z_2 + w}) (z_1 + c)) &= -\im(\ol{z}_1 z_2) + \im(\ol{z}_2 c) > 0 \\
\end{align*}
there is a geodesic segment $s_1 \subset \ol{P}_1$ in the direction of $z_1 + z_2 + c$, starting at the common starting point of $\gam$ and $\al_1$, and ending at a point in the interior of $\gam^\pr$. Similarly, since
\begin{align*}
\im(\ol{(z_1 + z_2 + c)} (z_2 + c)) &= \im(\ol{z}_1 z_2) + \im(\ol{z}_1 c) > 0 \\
\im(\ol{z}_2 (z_1 + z_2 + c)) &= -\im(\ol{z}_1 z_2) + \im(\ol{z}_2 c) > 0
\end{align*}
there is a geodesic segment $s_2 \subset \ol{P}_2$ in the direction of $z_1 + z_2 + c$, starting at a point in the interior of $\gam^\pr$, and ending at the common ending point of $\gam$ and $\al_2^\pr$. The segment $s_1$ ends at a point of the form $P_1(z_1 + t_2 c)$, and the segment $s_2$ starts at a point of the form $P_2(t_1 c)$. Since
\be
\im((\ol{z_1 + t_2 c}) (z_1 + z_2 + c)) = 0, \quad \im((\ol{z_2 + (1 - t_1)c}) (z_1 + z_2 + c)) = 0,
\ee
solving for $t_1$ and $t_2$ gives us
\be
t_1 = \frac{\im(\ol{z}_1 z_2) + \im(\ol{z}_1 c)}{\im(\ol{z}_1 c) + \im(\ol{z}_2 c)} = t_2 ,
\ee
and the above inequalities on $\im(\ol{z}_1 z_2)$ and $\im(\ol{z}_j c)$ ensure $0 < t_1,t_2 < 1$. Thus, the union $s_1 \cup s_2$ yields a saddle connection $\del$ in $\ol{P}_1 \cup \ol{P}_2$ with $z_1 + z_2 + c = \int_\del \om$. The same argument with the roles of $P_1$ and $P_2$ reversed yields a second saddle connection $\del^\pr$ in $\ol{P}_1 \cup \ol{P}_2$ with $z_1 + z_2 + c = \int_{\del^\pr} \om$.

Lastly, let $\al_{12} \subset \ol{P}_1 \cup \ol{P}_2$ be the closed loop obtained from the two geodesic segments $P_1(t_1 z_1 + c / 2)$, $0 \leq t_1 \leq 1$, and $P_2(t_2 z_2 + c / 2)$, $0 \leq t_2 \leq 1$. Then $\del,\del^\pr$ are isotopic to the Dehn twist of $\gam,\gam^\pr$ along the loop $\al_{12}$.
\end{proof}


\section{Leaf Closures in Connected Strata} \label{sec:leafclosures}

In this section, we prove Theorem \ref{thm:leafclosures}. The general approach will be to study the recurrence properties for absolute period leaves in a certain open neighborhood in period coordinates arising from a more complicated configuration of saddle connections, in order to apply the results from Section \ref{sec:Kapovich}. \\

\paragraph{\bf Antler configurations.} Fix an integer $g \geq 3$. Let $(X,\om)$ be a holomorphic $1$-form of genus $g \geq 3$ with exactly two zeros $Z_1,Z_2$ of odd orders $k_1,k_2$, respectively. An {\em antler configuration} of saddle connections on $(X,\om)$ is a collection of pairs of homologous saddle connections $\gam,\gam^\pr$, and $\al_j,\al_j^\pr$, $3 \leq j \leq g$, with the following properties.

\begin{itemize}
    \item The pair $\gam,\gam^\pr$ starts at $Z_1$ and ends at $Z_2$. For $3 \leq j \leq (k_1 + 3)/2$, the pair $\al_j,\al_j^\pr$ starts and ends at $Z_1$. For $(k_1 + 5)/2 \leq j \leq g$, the pair $\al_j,\al_j^\pr$ starts and ends at $Z_2$.
    \item One of the components of $X \sm (\gam \cup \gam^\pr)$ is a flat torus $T_2$ with a slit removed. For $3 \leq j \leq g$, one of the components of $X \sm (\al_j \cup \al_j^\pr)$ is a cylinder $C_j$.
    \item Let $S_1$ be the union of $\gam,\gam^\pr,\al_3,\al_3^\pr,\dots,\al_g,\al_g^\pr$. Let $S_2$ be the union of $T_2,C_3,\dots,C_g$. The complement $X \sm (S_1 \cup S_2)$ is a flat torus $T_1$ with a finite union of segments removed, and the removed segments form an embedded geodesic tree in $T_1$.
    \item For $3 \leq j \leq g$, the counterclockwise angle from the end of $\al_j$ to the end of $\al_j^\pr$ is $2\pi$.
    \item Around $Z_1$, the starts of $\gam,\al_3,\dots,\al_{(k_1 + 3)/2},\gam^\pr$ are in counterclockwise order. Around $Z_2$, the starts of $\ol{\gam}^\pr,\al_{(k_1 + 5)/2},\dots,\al_g,\ol{\gam}$ are in counterclockwise order.
\end{itemize}

We refer to $T_1,T_2$ as the {\em associated flat tori} and $C_3,\dots,C_g$ as the {\em associated cylinders} of the antler configuration. We refer to $\int_\gam \om$ as the {\em slit parameter}. Let $\del_j$ be a saddle connection in $C_j \cup Z(\om)$ crossing $C_j$ from bottom to top with respect to the orientation of $\al_j$. We refer to $\int_{\al_j} \om, \int_{\del_j} \om$ as the {\em handle parameters} for $C_j$. \\

Fix an integer $g \geq 3$, and fix odd integers $k_1 \geq k_2$ such that $k_1 + k_2 = 2g - 2$. Fix $\eps = \eps(g) > 0$ small, and fix $\eps_1 = \eps_1(g) > 0$ such that $\eps_1 / \eps$ is small. Let $n_1 = (k_1 - 1)/2$ and $n_2 = (k_2 - 1)/2$. Let $u_1,u_2,r_1,r_2$ all be $1$. Let $u_3,\dots,u_{(k_1 + 3)/2}$ be the complex arithmetic progression
\be
\frac{1}{10} - n_1 \eps i, \; \frac{(n_1-1)}{10n_1} - (n_1 - 1)\eps i, \; \dots, \; \frac{1}{10n_1} - \eps i,
\ee
and let $r_3,\dots,r_{(k_1+3)/2}$ be the real arithmetic progression $1/10, (n_1 - 1)/10n_1, \dots, 1/10n_1$. Let $u_{(k_1 + 5)/2},\dots,u_g$ be the complex arithmetic progression
\be
-\eps + \frac{1}{10}i, \; -2\eps + \frac{(n_2-1)}{10n_2}i, \; \dots, \; -n_2 \eps + \frac{1}{10n_2}i ,
\ee
and let $r_{(k_1+5)/2},\dots,r_g$ be the real arithmetic progression $1/10, (n_2-1)/10n_2, \dots, 1/10n_2$. Lastly, let $c_0 = -\frac{1}{10} + \frac{1}{10}i$. Note that
\be
r_j > \frac{1}{10g}, \quad 1 \leq j \leq g .
\ee

For our convenience, we will denote
\be
z = (z_1,\dots,z_g), \quad w = (w_1,\dots,w_g), \quad (z,w) = (z_1,\dots,z_g,w_1,\dots,w_g) .
\ee
Let $e_1,\dots,e_g$ be the standard basis vectors in $\C^g$. Define
\begin{align*}
\mc{U}(k_1,k_2) &= \{(z,w) \in \C^{2g} : |z_j - u_j| < \eps_1, \im(\ol{z}_j w_j) > r_j + \frac{1}{100g}, 1 \leq j \leq g\} \\
\mc{U}_1(k_1,k_2) &= \{(z,w,c) \in \C^{2g+1} : (z,w) \in \mc{U}(k_1,k_2), |c - c_0| < \eps_1 \} .
\end{align*}
For all $(z,w,c) \in \mc{U}_1(k_1,k_2)$, and for $1 \leq j < k \leq (k_1 + 3)/2$, since $z_j,z_k$ have real parts in $(0,1 + \eps)$ and imaginary parts in $(-g\eps,g\eps)$, we have
\begin{equation} \label{eq:zjkU}
\left| \im(\ol{z}_j z_k) \right| < \im((\ol{1 - g\eps i})(1 + g\eps i)) < 3 g \eps
\end{equation}
since $\eps = \eps(g)$ is small. Similarly for $w_1,w_2,z_{(k_1 + 5)/2},\dots,z_g$. For $1 \leq j \leq g$, we similarly have 
\begin{equation} \label{eq:zjcU}
\frac{1}{200g} < \im(\ol{z}_j c) < \frac{1}{10} + \eps .
\end{equation}

If $k_1 > k_2$, let $\mc{C}$ be the connected stratum $\Om\cM_g(k_1,k_2)$. If $k_1 = k_2$, let $\mc{C}$ be the nonhyperelliptic component of $\Om\cM_g(g-1,g-1)$. Each $(z,w,c) \in \mc{U}(k_1,k_2)$ determines a well-defined $1$-form in $\mc{C}$ with an antler configuration with associated flat tori $T_1 = (\C / (\Z z_1 + \Z w_1), dz)$, $T_2 = (\C / (\Z z_2 + \Z w_2), dz)$, slit parameter $c$ for the slits $\gam,\gam^\pr$, handle parameters $z_j,w_j$ for the associated cylinder $C_j$, $3 \leq j \leq g$. In this way, we get an {\em antler map}
\be
\Psi_\cC : \mc{U}_1(k_1,k_2) \ra \cC ,
\ee
and an associated {\em leaf map}
\be
L_\cC : \mc{U}(k_1,k_2) \ra \{\text{absolute period leaves in } \cC\}
\ee
that sends $(z,w)$ to the absolute period leaf through $\Psi_\cC(z,w,c)$. The resulting leaf does not depend on the choice of $c$ such that $|c - c_0| < \eps_1$.

\begin{lem} \label{lem:dehntwistrecurrence}
For all $(z,w) \in \mc{U}(k_1,k_2)$, we have the following.
\begin{enumerate}
    \item $L_\cC(z,w) = L_\cC(z, w + z_j e_j)$ for all $1 \leq j \leq g$.
    \item $L_\cC(z,w) = L_\cC(z, w + z_2 e_1 + z_1 e_2)$.
    \item $L_\cC(z,w) = L_\cC(z, w + z_k e_j + z_j e_k)$ for all $1 \leq j \leq 2$ and $3 \leq k \leq (k_1 + 3)/2$.
    \item $L_\cC(z,w) = L_\cC(z + w_2 e_1 + w_1 e_2, w)$.
    \item $L_\cC(z,w) = L_\cC(z + z_k e_j, w + w_j e_k)$ for all $1 \leq j \leq 2$ and $(k_1 + 5)/2 \leq k \leq g$.
\end{enumerate}
\end{lem}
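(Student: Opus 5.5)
For each of the five identities I will exhibit a path of $1$-forms inside a single absolute period leaf that joins $\Psi_\cC(z,w,c)$ to a $1$-form $\Psi_\cC(z',w',c')$, where $(z',w')$ is the displaced parameter and $|c'-c_0|<\eps_1$. Since $L_\cC$ does not depend on $c$, this proves the identity. Each such path will be a composition of two kinds of moves.

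\textbf{Move (a): re-marking.} The same $1$-form sometimes admits a second antler presentation whose parameters are related to the first by a Dehn twist. This changes the parameters but not the point of $\cC$.

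\textbf{Move (b): slit motion.} We vary the slit parameter $c$ along a path, which stays in the leaf by the connected-sum discussion in Section~\ref{sec:configurations}. To use this, I view $\Psi_\cC(z,w,c)$ as the connected sum of $T_1$ and $T_2$ along $\gam,\gam^\pr$. The handles $C_3,\dots,C_g$ are then bubbled handles attached at $Z_1$ (for $j\le (k_1+3)/2$) or at $Z_2$ (for $j\ge (k_1+5)/2$). The bubbled-handle family construction keeps everything in one leaf as long as no saddle connection obstructs the bubbling slits.

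\textbf{Items (1) and (2).} For $j\ge 3$, the shear $w\mapsto w+z_je_j$ is just the change of the crossing saddle connection $\del_j$ by one Dehn twist in $C_j$. So the $1$-form is literally unchanged, which is move (a). For $j=1,2$, I move the slit $\gam$ of $T_j$ once around the closed geodesic in class $a_j$, i.e.\ $c\mapsto c+z_j$ continuously, by a path in the flat torus $T_j$. This returns a $1$-form with the same tori but $w_j$ replaced by $w_j+z_j$; this is the standard computation that transporting a slit around a loop acts by a transvection on the period of the dual curve. The path must avoid the embedded tree of removed segments in $T_1$, and the bubbled handles must stay attachable. I will check this using the smallness $|\im(\ol z_j z_k)|<3g\eps$ from \eqref{eq:zjkU} and the bounds \eqref{eq:zjcU}: slits of holonomy near $c_0$ and handle slits of holonomy near $u_k$ fit in the parallelograms with room to spare. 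For (2), I apply Lemma~\ref{lem:sc12} with $P_1\subset T_1$ and $P_2\subset T_2$ the parallelograms spanned by $z_1$, $z_2$ and $c$. The inequality $-\im(\ol z_1c)<\im(\ol z_1z_2)<\im(\ol z_2c)$ holds by \eqref{eq:zjkU} and \eqref{eq:zjcU}. The new homologous pair $\del,\del^\pr$ with holonomy $z_1+z_2+c$ is the Dehn twist of $\gam,\gam^\pr$ along $\al_{12}$, and re-presenting the surface as a connected sum along $\del,\del^\pr$ realizes the factor mix $m_{12}$. I then slide the slit back to holonomy near $c_0$ within $T_1$ and $T_2$ by move (b).

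\textbf{Items (3)--(5).} For (3), I apply the same lemma with one parallelogram in the torus $T_j$ ($j=1,2$) and one in the cylinder $C_k$, with $\al_k,\al_k^\pr$ playing the role of $\gam,\gam^\pr$. This makes sense because $C_k$ is attached at $Z_1$ and the cyclic order at $Z_1$ places it next to the slit into $T_j$. Item (4) is item (2) with the roles of $a$- and $b$-curves in $T_1,T_2$ exchanged, using $w_1,w_2$ instead of $z_1,z_2$; the hypothesis holds by the analogue of \eqref{eq:zjkU} for $w_1,w_2$ noted in the text. For (5), the handles attached at $Z_2$ have $z_k$ nearly imaginary. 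The relevant twist is along a curve crossing $T_j$ and $C_k$ in the other direction, which produces the transvection $(z,w)\mapsto(z+z_ke_j,\,w+w_je_k)$ as stated, and I run the same argument.

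\textbf{Main obstacle.} The hard step is verifying, for each move, that the intermediate surfaces stay in the antler chart. Concretely: the new saddle connections bound the required cylinders and tori; the counterclockwise orders at $Z_1,Z_2$ required by the definition are preserved; and the final parameters land back in $\mc U(k_1,k_2)$ up to the allowed sliding of $c$. I will handle this by explicit parallelogram geometry, tracking the angles at the zeros. The smallness of $\eps_1/\eps$ and of $\eps$ should make every required strict inequality hold uniformly.
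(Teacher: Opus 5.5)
Items (2) and (4) match the paper. Item (3), and with it item (5), has a genuine gap: $\al_k,\al_k^\pr$ is the wrong homologous pair.

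\textbf{Why $\al_k,\al_k^\pr$ fails.} Up to shears, the factor mix $w\mapsto w+z_ke_j+z_je_k$ is the Dehn twist along a curve of class $a_j+a_k$. Lemma~\ref{lem:sc12} twists along a curve whose class is the sum of the classes of the closed-geodesic sides of its two parallelograms. So you need a pair bounding a parallelogram with sides of period $z_k$ inside $C_k$ and one with sides of period $z_j$ in $T_j$. That forces the pair to cross the core of $C_k$.
\begin{itemize}
\item The boundary pair $\al_k,\al_k^\pr$ is parallel to that core. The parallelogram it bounds in $C_k$ has sides crossing $C_k$ (period $w_k$ modulo $z_k$). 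The one on the other side has sides of some period $v$ of $T_1$ transverse to $z_k$.
\item The lemma therefore produces saddle connections of holonomy $z_k+w_k+v$, up to signs and multiples of $z_k$. This changes the core period of the handle. It is not the required transvection, and the result lies far outside $\mc{U}(k_1,k_2)$.
\item Also, $\al_k,\al_k^\pr$ are edges of the slit tree in $T_1$, so they border only $C_k$ and $T_1$. For $j=2$ no parallelogram in $T_2$ has them as sides. The cyclic order of directions at $Z_1$ does not supply one.
\end{itemize}

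\textbf{The missing idea.} The paper keeps $\gam,\gam^\pr$ as the pair and first moves it into the handle.
\begin{itemize}
\item Run the slit parameter around a rectilinear loop $c(t)$ encircling $0$, chosen so that the handles at $Z_2$ persist. Each circuit rotates the initial direction of $\gam$ at $Z_1$ by $2\pi$.
\item After the appropriate number of circuits, $\gam,\gam^\pr$ bound a parallelogram with sides of period $z_k$ in $C_k$. On the other side they bound one with sides of period $z_2$ in $T_2$.
\item Lemma~\ref{lem:sc12} then gives $\del,\del^\pr$ of holonomy $c+z_k+z_2$. The new presentation has handle parameters $z_k,w_k+z_2$ for $C_k$ and the torus $\C/(\Z z_2+\Z(w_2+z_k))$.
\item Slide $c$ back linearly. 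Compare the result with the endpoint of the same loop started at $\Psi_\cC(z,w+z_ke_2+z_2e_k,c)$.
\item For $j=1$, a different loop first swaps the roles of $T_1$ and $T_2$. For (5), the rotation is done at $Z_2$, using parallelograms of periods $w_j$ and $z_k$.
\end{itemize}

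\textbf{Item (1) for $j=1,2$.} Your argument here is also wrong as stated. The slit must have the same holonomy in both summands. So moving $c$ to $c+z_j$ ends at a connected sum with slit holonomy $c+z_j$ in both tori, which is outside the chart, not at a re-marking with $w_j$ replaced by $w_j+z_j$. It is also unnecessary: $\C/(\Z z_j+\Z w_j)=\C/(\Z z_j+\Z(w_j+z_j))$, so $\Psi_\cC(z,w,c)=\Psi_\cC(z,w+z_je_j,c)$ directly. This is exactly your move (a) for $j\ge 3$, and it is how the paper argues.
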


\begin{proof}
First, we prove item (1). For $1 \leq j \leq g$, the holomorphic $1$-forms $\Psi_\cC(z,w,c)$ and $\Psi_\cC(z, w + z_j e_j, c)$ are the same, and in particular $L_\cC(z,w) = L_\cC(z, w + z_j e_j)$.

Next, we prove item (2). Let $(X,\om) = \Psi_\cC(z,w,c)$. On $T_2$, there is an embedded open parallelogram $P_2$ with one pair of parallel sides given by the slits $\gam,\gam^\pr$ and the other pair given by two closed geodesics with period $z_2$. By definition of $\cU_1(k_1,k_2)$, we have $-3g\eps < \im(\ol{z}_1 z_j) < 0$ for $3 \leq j \leq (k_1 + 3)/2$, $0 < \im(\ol{z}_1 z_j) < \frac{1}{10} + \eps$ for $(k_1 + 5)/2 \leq j \leq g$, $0 < \im(\ol{z}_1 c) < 1/10 + \eps$, and $\im(\ol{z}_1 w_1) > 9/10$. Letting $M_1 = \max_{3 \leq j \leq (k_1 + 3)/2} |\im(\ol{z}_1 z_j)|$ and $M_2 = \max_{(k_1 + 5)/2 \leq j \leq g} |\im(\ol{z}_1 z_j)|$, we then have
\be
\im(\ol{z}_1 w_1) > \frac{9}{10} > \im(\ol{z}_1 c) + M_1 + M_2 .
\ee
Thus, there are $3$ disjoint open bands $B_1,B_2,B_3 \subset T_1$ of closed geodesics parallel to $z_1$ such that the interiors of the slits $\gam,\gam^\pr$ lie in $B_1$, the interiors of the slits $\al_3,\dots,\al_{(k_1+3)/2}$ lie in $B_2$, and the interiors of the slits $\al_{(k_1+5)/2},\dots,\al_g$ lie in $B_g$. In particular, $B_1 \subset T_1$ contains an embedded open parallelogram $P_1$ with one pair of parallel sides given by $\gam,\gam^\pr$ and the other pair given by two closed geodesics with period $z_1$. Since $|\im(\ol{z}_j z_k)| < 3g \eps$ and $\im(\ol{z}_j c) > 1/200g$ and $\eps = \eps(g)$ is small, we can apply Lemma \ref{lem:sc12} to get another pair of homologous saddle connections $\del,\del^\pr \subset \ol{P}_1 \cup \ol{P}_2$ with $\int_\del \om = c + z_1 + z_2$, obtained by applying a Dehn twist in $\ol{P}_1 \cup \ol{P}_2$ to $\gam,\gam^\pr$. Moreover, we similarly have
\be
\im(\ol{z}_1 (w_1 + z_1 + z_2)) > \frac{9}{10} > \im(\ol{z}_1 (c + z_1 + z_2)) + M_1 + M_2 ,
\ee
so $(X,\om)$ admits a second antler configuration, obtained by applying the same Dehn twist. The associated flat tori are
\be
T_1^\pr = (\C / (\Z z_1 + \Z (w_1 + z_2))), \quad T_2^\pr = (\C / (\Z z_2 + \Z (w_2 + z_1))),
\ee
the slit parameter is $c + z_1 + z_2$, and the handle parameters remain the same. Finally, along the line segment $c(t) = tc + (1-t)(c + z_1 + z_2) \in \C \sm \{0\}$, $t \in [0,1]$, from $c + z_1 + z_2$ to $c$, we similarly have
\be
\im(\ol{z}_1 c(t)) + M_1 + M_2 < \frac{1}{5} + 10 g \eps < \frac{9}{10} ,
\ee
and so the path $c(t) \in \C \sm \{0\}$ determines a well-defined path of connected sums in the absolute period leaf through $(X,\om)$, starting at $(X,\om)$ and ending at $\Psi_\cC(z,w + z_2 e_1 + z_1 e_2,c)$. Thus, $L_\cC(z,w) = L_\cC(z,w + z_2 e_1 + z_1 e_2)$.

The proof of item (4) is similar to item (2), with $w_1,w_2$ in place of $z_1,z_2$.

Next, we prove item (3). Suppose $j = 2$ and  and $3 \leq k \leq (k_1 + 3)/2$. Start with $(X,\om) = \Psi(z,w,c)$, and consider the path $c(t) \in \C$ given by $c(0) = c$, $c(t+1) = c(t)$, and
\be
c(t) = \begin{cases}
    c(0) - \frac{4t}{5}i, \quad t \in [0,1/4] \\
    c(1) + 4(t - 1/4)\left(\frac{1}{10} + 2g\eps\right) , \quad t \in [1/4,1/2] \\
    c(2) + \frac{4(t - 1/2)}{5}i, \quad t \in [1/2,3/4] \\
    c(3) - 4(t - 3/4)\left(\frac{1}{10} + 2g\eps\right), \quad t \in [3/4,1] .
\end{cases}
\ee
The path $c(t)$ determines a path $(X_t,\om_t) \in L_\cC(z,w)$ on which the holonomy of the slit $\gam$ is $c(t)$. Roughly speaking, along the path $(X_t,\om_t)$, the slits $\gam,\gam^\pr$ ``rotate'' counterclockwise by $2\pi$ around their starting point. The particular rectilinear path above is chosen to ensure that the saddle connections $\al_j,\al_j^\pr$, $(k_1 + 5)/2,\dots,g$, persist along this path. Let $\theta_0$ be the direction of $\gam$ emanating from its starting point $Z_1$, and for $n \in \Z$, let $\theta_n$ be obtained from $\theta_0$ by rotating counterclockwise around $Z_1$ by $2\pi n$. In particular, the directions $\theta_1,\theta_3,\dots,\theta_{k_1 - 2}$ lie in the cylinders $C_3,C_4,\dots,C_{(k_1+3)/2}$. Then on $(X_{2j-1},\om_{2j-1})$, there is an embedded open parallelogram $P_{j+2}$ with one pair of parallel sides given by $\gam,\gam^\pr$ and the other pair given by two closed geodesics with period $z_j$.  On $T_2 \subset X_{2j-1}$, there is another embedded open parallelogram $P_{j+2}^\pr$ bounded by $\gam,\gam^\pr$ and a two closed geodesics with period $z_2$. Then as in item (2), by equations (\ref{eq:zjkU}) and (\ref{eq:zjcU}) we can apply Lemma \ref{lem:sc12} to get another pair of homologous saddle connections $\del,\del^\pr \subset \ol{P}_{j+2} \cup \ol{P}^\pr_{j+2}$ obtained by applying a Dehn twist in $\ol{P}_{j+2} \cup \ol{P}^\pr_{j+2}$ to $\gam,\gam^\pr$. Thus, $(X_{2j-1},\om_{2j-1})$ can also be presented by replacing $\gam,\gam^\pr$ with $\del,\del^\pr$, replacing $C_j$ with a cylinder with handle parameters $z_j,w_j + z_2$, and replacing $T_2$ with the flat torus $T_{2,j} = (\C / (\Z z_2 + \Z (w_2 + z_j)), dz)$. As in item (2), the linear path $c_j(t) \in \C$, $t \in [0,1]$, from $c_j(0) = c + z_j + z_2$ to $c_j(1) = c$, determines a well-defined path of connected sums in the absolute period leaf through $(X,\om)$, starting at $(X_{2j-1},\om_{2j-1})$ and ending at a holomorphic $1$-form $(Y,\eta)$. Finally, let $(X^\pr,\om^\pr) = \Psi_\cC(z,w + z_{2j-1} e_2 + z_2 e_{2j-1}, c)$, so the given antler configuration for $(X^\pr,\om^\pr)$ has the same slit parameter as that of $(X,\om)$, the same handle parameters except for a bubbled handle with periods $z_j,w_j + z_2$ instead of $z_j,w_j$, and an associated flat torus $T_2^\pr = (\C / (\Z z_2 + \Z (w_2 + w_{2j-1})), dz)$ instead of $T_2$. The same path $c(t)$ above determines a path $(X_t^\pr,\om_t^\pr) \in L_\cC(z,w + z_{2j-1} e_2 + z_2 e_{2j-1})$, and we have $(X_{2j-1}^\pr,\om_{2j-1}^\pr) = (Y,\eta)$. Thus, $L_\cC(z,w) = L_\cC(z,w + z_{2j-1} e_2 + z_2 e_{2j-1})$.

To prove item (3) when $j = 1$ and $3 \leq k \leq (k_1 + 3)/2$, start with $(X,\om) = \Psi(z,w,c)$ again, and now travel along the path $(X_t,\om_t) \in L(z,w)$ determined by the path $c_1(t) \in \C \sm \{0\}$ given by $c_1(0) = c$ and
\be
c_1(t) = \begin{cases}
    c_1(0) - \frac{4t}{5}i, \quad t \in [0,1/4] \\
    c_1(1) + \frac{4(t - 1/4)}{5} , \quad t \in [1/4,1/2] \\
    c_1(2) + \frac{4(t - 1/2)}{5}i, \quad t \in [1/2,3/4] \\
    c_1(3) - \frac{4(t - 3/4)}{5}, \quad t \in [3/4,1] .
\end{cases}
\ee
The $1$-form $(X_1,\om_1)$ has an antler configuration with the same slit parameter and bubbled handle parameters, but the associated flat tori $T_2,T_1$ have been swapped. The rest of the argument is now the same as in the $j = 1$ case of item (3).

Lastly, the proof of item (5) is similar to item (3), with $w_1,w_2,z_{(k_1 + 5)},\dots,z_g$ in place of $z_1,z_2,z_3,\dots,z_{(k_1+3)/2}$.
\end{proof}

Fix $(X,\om) \in \Psi_\cC(z,w,c)$ given by an antler configuration as above. Choose closed geodesics $\al_j,\del_j$, $1 \leq j \leq 2$, with periods $z_j,w_j$, respectively. Then the closed loops $\{\al_j,\del_j\}_{j=1}^g$ represent a symplectic basis $\{a_j,b_j\}_{j=1}^g$ for $H_1(X;\Z)$. Let $p_\om : H_1(X;\Z) \ra \C$ be the associated homomorphism given by integrating $\om$ over closed loops in $X$.

For $3 \leq j_1 \leq (k_1 + 3)/2$, let $U_{j_1} \subset \Sp(H_1(X;\Z))$ be the free abelian subgroup generated by the shears $s_j$, $j \in \{1,2,j_1\}$, given by $s_j(b_j) = b_j - a_j$, and by the factor mixes $m_{jk}$, $j < k$ and $j,k \in \{1,2,j_1\}$, given by $m_{jk}(b_j) = b_j - a_k$ and $m_{jk}(b_k) = b_k - a_j$. Any basis elements not mentioned are fixed. Similarly, for $(k_1 + 5)/2 \leq j_1 \leq g$, let $U_{j_1}^\pr \subset \Sp(H_1(X;\Z))$ be the free abelian subgroup generated by the shears $s_j$, $j \in \{1,2,j_1\}$, given by $s_j(b_j) = b_j - a_j$, and by the modified factor mixes $m_{jk}^\pr$, $j < k$ and $j,k \in \{1,2,j_1\}$, given by $m_{jk}^\pr(b_j) = b_j - b_k$ and $m_{jk}^\pr(a_k) = a_k - a_j$. In the local period coordinate neighborhood determined by the antler map $\Psi_\cC$, we can identify connected components of absolute period leaves with homomorphisms $H_1(X;\Z) \ra \C$.

For $(X,\om) \in \mc{U}(k_1,k_2)$, we denote by $L_\cC(p_\om)$ the absolute period leaf through $(X,\om)$. For $3 \leq j \leq g$ and $u \in U_j$ such that
\be
((u \cdot p_\om)(a_1),\dots,(u \cdot p_\om)(b_g)) \in \mc{U}(k_1,k_2) ,
\ee
we let $L_\cC(u \cdot p_\pm)$ denote the associated absolute period leaf. We will simply say $u \cdot p_\om \in \mc{U}(k_1,k_2)$ going forward.

For $r > 0$, define a smaller neighborhood $\mc{U}(k_1,k_2,r) \subset \mc{U}(k_1,k_2)$ by changing the inequalities $\im(\ol{z}_j w_j) > r_j + 1/100g$ in the definition of $\mc{U}(k_1,k_2)$ to the inequalities
\be
\im(\ol{z}_j w_j) > r + r_j + \frac{1}{100g}, \quad 1 \leq j \leq g.
\ee

\begin{lem} \label{lem:Uorbitleaf}
Fix $r > 3 g \eps$, fix $(X,\om) \in \mc{U}(k_1,k_2,r)$, and fix $3 \leq j \leq g$. For all $u \in U_j$ such that $u \cdot p_\om \in \mc{U}(k_1,k_2,r)$, we have $L_\cC(p_\om) = L_\cC(u \cdot p_\om)$.
\end{lem}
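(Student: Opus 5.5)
The plan is to combine the elementary moves of Lemma \ref{lem:dehntwistrecurrence} with the path-navigation result Lemma \ref{lem:UAhalfspaces}.

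\textbf{Step 1: generators preserve the leaf.} First I would check that each generator of $U_j$ acts on the period coordinates $(z,w)$ exactly as one of the moves in Lemma \ref{lem:dehntwistrecurrence}, and therefore preserves the leaf map $L_\cC$ whenever both endpoints lie in $\mc{U}(k_1,k_2)$.
\begin{itemize}
\item For $3 \leq j \leq (k_1+3)/2$, the shears $s_1,s_2,s_j$ change $w$ by $z_i e_i$ (item (1)). The mix $m_{12}$ changes $w$ by $z_2e_1+z_1e_2$ (item (2)). The mixes $m_{1j},m_{2j}$ change $w$ by $z_je_i+z_ie_j$ (item (3)). Inverses are handled by applying the same statements at the translated point.
\item For $(k_1+5)/2 \leq j \leq g$, the shears again give item (1). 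The modified mixes $m'_{ij}$ with $i \in \{1,2\}$ change $(z,w)$ to $(z+z_je_i, w+w_ie_j)$ up to sign and index conventions, which is item (5). For the pair $\{1,2\}$ I would use item (4) with the roles of $z$ and $w$ in $T_1,T_2$ exchanged, or else reduce to item (2) by a shear.
\end{itemize}
This matching of the abstract symplectic generators with the concrete moves, including signs and which coordinate is the ``$A$-period'', is routine but must be done carefully.

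\textbf{Step 2: reduce to a half-space problem.} The second case is not literally a Lagrangian stabilizer of the form in Section \ref{sec:Kapovich}. I would treat it by relabelling: for $m'$, take the isotropic triple $\{a_1,a_2,b_j\}$ with complementary $\{b_1,b_2,-a_j\}$, or similar, so that $U'_j$ becomes a group $U_{A'}$ acting by translations on an affine slice. Only the three coordinates indexed by $\{1,2,j\}$ move. The other coordinates stay fixed, and so do all the defining inequalities $|z_i-u_i|<\eps_1$ on the fixed ``$A$'' side.

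\textbf{Step 3: apply Lemma \ref{lem:UAhalfspaces} with $g=3$.} The relevant slice is three-dimensional, with constraint functions $A_i = \im(\ol{z}_i w_i)$ for $i\in\{1,2,j\}$ and fixed sum. I would set $\rho_i = r + r_i + 1/100g$ (normalizing the total area if needed) and take $\eps' = 3g\eps$. Equation (\ref{eq:zjkU}) and its analogue give $|\im(\ol{z}_i z_k)| < 3g\eps$ for the relevant pairs, and the hypothesis $r > 3g\eps$ guarantees $\eps' < \min \rho_i$. Lemma \ref{lem:UAhalfspaces} then connects $p_\om$ to $u\cdot p_\om$ by a chain of single generators $u_{j_k}^{\pm1}$. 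Every intermediate point satisfies $A_i > r + r_i + 1/100g - 3g\eps > r_i + 1/100g$, so it lies in $\mc{U}(k_1,k_2)$; the $z$-inequalities are preserved automatically.

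\textbf{Step 4: conclude.} By Step 1, each link in the chain preserves $L_\cC$, which gives $L_\cC(p_\om) = L_\cC(u\cdot p_\om)$.

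\textbf{Main obstacle.} I expect the hardest part to be Step 2 together with the second case of Step 1. The modified mixes change $z$ as well as $w$, so I need to confirm three things: that the roles of $z$ and $w$ can be swapped consistently, that the smallness hypothesis $|\im(\ol{w}_i z_j)|$, or whichever pairing arises, still holds in $\mc{U}$ (the text asserts this for $w_1,w_2,z_{(k_1+5)/2},\dots$), and that perturbing $z_i$ along the chain does not leave the condition $|z_i-u_i|<\eps_1$. If that last point fails, I would restrict to chains in which the moved coordinates are the ones not subject to the $\eps_1$-constraint after relabelling.
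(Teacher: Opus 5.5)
For $3 \le j \le (k_1+3)/2$ your argument is the paper's proof, and it is correct. You restrict to the slice in the moving coordinates $w_1,w_2,w_j$, take $\rho_i = r + r_i + 1/100g$ and $\eps' = 3g\eps$, and apply Lemma \ref{lem:UAhalfspaces}. You then use $r > 3g\eps$ to get $\im(\ol{z}_i w_i) > r_i + 1/100g$ at every vertex of the chain, so every vertex lies in $\mc{U}(k_1,k_2)$. Finally you chain items (1)--(3) of Lemma \ref{lem:dehntwistrecurrence}, treating inverse steps at the translated point. Two small remarks: you make explicit the role of $r>3g\eps$, which the paper leaves implicit; and the proof of Lemma \ref{lem:UAhalfspaces} uses neither the value of $\sum_i A_i$ nor nonparallelism, which matters because $u_1 = u_2$ allows $z_1 \parallel z_2$.

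For $(k_1+5)/2 \le j \le g$ there is a genuine gap. The paper's one line (``similar, with $w_1,w_2$ in place of $z_1,z_2$'') does not close it either. The source is an asymmetry in $\mc{U}(k_1,k_2)$: every $z_i$ is pinned by $|z_i - u_i| < \eps_1$, while the $w_i$ are constrained only by area.
\begin{itemize}
\item In the first case the moving coordinates are the unpinned $w_1,w_2,w_j$, so area control is all you need.
\item In the second case the moves actually supplied are items (4) and (5) of Lemma \ref{lem:dehntwistrecurrence}. These fix $w_1,w_2,z_j$ and move $z_1,z_2,w_j$, so the relevant group is the stabilizer of $\Span_\Z\{b_1,b_2,a_j\}$.
\item It is not the stabilizer of $\Span_\Z\{a_1,a_2,b_j\}$ as in your Step 2. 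That choice contradicts your own Step 1, its mixes move the pinned $z_j$ by $\pm z_1,\pm z_2$, and those moves are not provided by Lemma \ref{lem:dehntwistrecurrence}. The paper's displayed $m'_{jk}$ is of that kind, and as written it is neither symplectic nor commuting with $s_{j_1}$.
\end{itemize}
With the correct triple, every generator other than the shear on $w_j$ changes $z_1$ or $z_2$ by one of $\pm w_1, \pm w_2, \pm z_j$. Their moduli are roughly at least $1/2$, $1/2$ and $1/(10n_2)$, all far larger than $\eps_1$.

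So a chain whose vertices all lie in $\mc{U}(k_1,k_2)$, the domain of $L_\cC$ and of Lemma \ref{lem:dehntwistrecurrence}, can use only $s_j^{\pm 1}$. Yet for generic $(X,\om)$ there are $u \notin \lr{s_j}$ with $u \cdot p_\om \in \mc{U}(k_1,k_2,r)$, because only the net changes in $z_1,z_2$ need to be small. Lemma \ref{lem:UAhalfspaces} controls only the area inequalities, so it cannot prevent this. Your fallback of restricting to chains that avoid the pinned coordinates is unavailable, since $u$ is prescribed.

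The smallness hypothesis also fails. It would now concern $|\im(\ol{w}_1 w_2)|$ and $|\im(\ol{w}_i z_j)|$, and these need not be small on $\mc{U}(k_1,k_2)$; for example $w_1 = 5+2i$, $w_2 = -5+2i$ is allowed. The remark after (\ref{eq:zjkU}) about $w_1,w_2$ is valid only on $\mc{U}^\pr(k_1,k_2)$.

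To finish this case you would need a transposed neighborhood: $w_1,w_2$ pinned near vertical, $z_1,z_2$ constrained only by area, together with a check that the antler map and items (4)--(5) make sense there. Alternatively, restrict the lemma to $3 \le j \le (k_1+3)/2$, which is the case Lemma \ref{lem:leafclosurenbhd} invokes through Corollary \ref{cor:leafclosureU}.
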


\begin{proof}
Let $z_k,w_k,c$ be the local period coordinates of $(X,\om)$ arising from the given antler configuration on $(X,\om)$. We may assume $3 \leq j \leq (k_1 + 3)/2$. The case where $(k_1 + 5)/2 \leq j \leq g$ is similar, with $w_1,w_2$ in place of $z_1,z_2$. Moreover, by symmetry we may additionally assume $j = 3$. The action of $U_3$ only changes the periods $w_1,w_2,w_3$ and leaves the other period coordinates $z_1,\dots,z_g,w_4,\dots,w_g,c$ fixed. By definition of $\mc{U}(k_1,k_2,r)$, we have
\be
\{(w_1^\pr,w_2^\pr,w_3^\pr) \in \C^3 : (z_1,\dots,z_g,w_1^\pr,w_2^\pr,w_3^\pr,w_4,\dots,w_g) \in \mc{U}(k_1,k_2,r)\} = W(z^\pr)_\rho
\ee
with $z^\pr = (z_1,z_2,z_3)$ and $\rho = (r + r_1 + 1/100g, r + r_2 + 1/100g, r + r_3 + 1/100g) \in \R_{>0}^3$. By definition of $\mc{U}(k_1,k_2,r)$, we have $\im(\ol{z}_j w_j) > 1/20g$ for $1 \leq j \leq g$, and we have $|\im(\ol{z}_j z_k)| < 3g \eps$ for $1 \leq j < k \leq g$. Let $\eps^\pr = 3g \eps$. Since $\eps = \eps(g)$ is small, we have $0 < \eps^\pr < \min_j \rho_j$. Let $\rho^\pr = (\rho_1 - \eps^\pr, \rho_2 - \eps^\pr, \rho_3 - \eps^\pr)$, and note that
\be
\{(z_1,\dots,z_g,w_1^\pr,w_2^\pr,w_3^\pr,w_4,\dots,w_g) \in \C^{2g} : (w_1^\pr,w_2^\pr,w_3^\pr) \in W(z^\pr)_{\rho^\pr}\} \subset \mc{U}(k_1,k_2) .
\ee
We can apply Lemma \ref{lem:UAhalfspaces} to
\be
(p_\om(w_1),p_\om(w_2),p_\om(w_3)), \; ((u \cdot p_\om)(w_1), (u \cdot p_\om)(w_2), (u \cdot p_\om)(w_3)) \in W(z^\pr)_\rho ,
\ee
to get a sequence $u_0,\dots,u_N \in U_3$ such that $u_0$ is the identity and $u_k u_{k-1}^{-1}$ is one of the shears $s_1^{\pm 1},s_2^{\pm 1},s_3^{\pm 1}$ or one of the factor mixes $m_{12}^{\pm 1},m_{23}^{\pm 1},m_{13}^{\pm 1}$ and such that
\be
((u \cdot p_\om)(w_1), (u \cdot p_\om)(w_2), (u \cdot p_\om)(w_3)) \in W(z^\pr)_{\rho^\pr}
\ee
for $1 \leq k \leq N$. Passing back to elements of $\mc{U}(k_1,k_2)$, we can now inductively apply Lemma \ref{lem:dehntwistrecurrence} to get
\be
L_\cC(p_\om) = L_\cC(u_1 \cdot p_\om) = L_\cC(u_2 \cdot p_\om) = \cdots = L_\cC(u_N \cdot p_\om) = L(u \cdot p_\om) .
\ee
\end{proof}

\begin{cor} \label{cor:leafclosureU}
Fix $r > 3g\eps$, fix $(X,\om) \in \mc{U}(k_1,k_2,r)$, and fix $3 \leq j \leq g$. If $(X^\pr,\om^\pr) \in \mc{U}(k_1,k_2,r)$ is sufficiently nearby to $(X,\om)$, and the local period coordinate vector $(z^\pr,w^\pr) \in \C^{2g}$ of $(X^\pr,\om^\pr)$ lies in the $U_j$-orbit closure of the local period coordinate vector $(z,w) \in \C^{2g}$ of $(X,\om)$, then $(X^\pr,\om^\pr)$ is in the closure of the absolute period leaf $L_\cC(p_\om)$.
\end{cor}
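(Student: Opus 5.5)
The proof will combine Lemma \ref{lem:Uorbitleaf} with the fact that the antler map $\Psi_\cC$ is a local homeomorphism onto its image in period coordinates. Local period coordinates near $(X,\om)$ are $(z,w,c)$, with $(z,w)$ the absolute periods and $c$ the relative period. Absolute period leaves are, locally, the sets where $(z,w)$ is fixed. Since $(X^\pr,\om^\pr)$ is sufficiently close to $(X,\om)$, we can take $(X^\pr,\om^\pr) = \Psi_\cC(z^\pr,w^\pr,c^\pr)$ with $|c^\pr - c_0| < \eps_1$. Both points then lie in the chart determined by the same antler configuration, and the homology markings (the symplectic basis $\{a_k,b_k\}$) are transported consistently. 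Hence we may regard $(z^\pr,w^\pr)$ as the vector of values of a homomorphism $p^\pr$ on the same basis.

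By hypothesis, $(z^\pr,w^\pr)$ lies in the closure of $U_j \cdot (z,w)$. So there is a sequence $u_n \in U_j$ with $u_n \cdot p_\om \to p^\pr$ in $\C^{2g}$. The set $\mc{U}(k_1,k_2,r)$ is open and contains $(z^\pr,w^\pr)$, so $u_n \cdot p_\om \in \mc{U}(k_1,k_2,r)$ for all large $n$. Lemma \ref{lem:Uorbitleaf} then gives $L_\cC(u_n \cdot p_\om) = L_\cC(p_\om)$ for all large $n$. In particular, the $1$-forms $\Psi_\cC(u_n \cdot p_\om, c^\pr)$ lie on the leaf $L_\cC(p_\om)$; this uses that $L_\cC$ does not depend on the choice of $c$ with $|c - c_0| < \eps_1$.

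Finally, $\Psi_\cC$ is continuous on $\mc{U}_1(k_1,k_2)$, so
\[
\Psi_\cC(u_n \cdot p_\om, c^\pr) \to \Psi_\cC(z^\pr,w^\pr,c^\pr) = (X^\pr,\om^\pr)
\]
in $\cC$. Therefore $(X^\pr,\om^\pr)$ lies in the closure of $L_\cC(p_\om)$.

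The main subtlety is the first step: making sure that $(X^\pr,\om^\pr)$ is genuinely of the form $\Psi_\cC(z^\pr,w^\pr,c^\pr)$. Equivalently, the antler configuration on $(X,\om)$ must persist to nearby $1$-forms, with the coordinates $(z,w)$ transported compatibly with the identification of homology used to define the $U_j$-action. I would handle this by noting that the image of $\mc{U}_1(k_1,k_2)$ under $\Psi_\cC$ is open (the configuration is defined by open conditions on periods), and $\Psi_\cC$ is a local inverse of period coordinates there. Then ``sufficiently nearby'' can be taken to mean lying in this chart with $(z^\pr,w^\pr)$ as coordinates.
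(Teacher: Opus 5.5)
Your proposal is correct and matches the paper's intended argument. The paper gives no separate proof and treats the corollary as an immediate consequence of Lemma \ref{lem:Uorbitleaf}, namely: approximate $(z^\pr,w^\pr)$ by $u_n \cdot (z,w)$ inside the open set $\mc{U}(k_1,k_2,r)$, apply that lemma together with the $c$-independence of $L_\cC$, and pass to the limit using continuity of $\Psi_\cC$.
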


We now shrink the neighborhood $\mc{U}(k_1,k_2)$ further as follows. Define a smaller neighborhood $\mc{U}^\pr(k_1,k_2) \subset \mc{U}(k_1,k_2)$ by changing the inequalities $\im(\ol{z}_j w_j) > r_j + 1/100g$ in the definition of $\mc{U}(k_1,k_2)$ to the inequalities
\begin{align*}
|w_j - i| < \eps_1, & \quad 1 \leq j \leq (k_1 + 3)/2, \\
|w_j + 1| < \eps_1, & \quad (k_1 + 5)/2 \leq j \leq g .
\end{align*}
On $(X,\om) \in \mc{U}^\pr(k_1,k_2)$, the tori $T_1,T_2$ and the cylinders $C_3,\dots,C_{(k_1 + 3)/2}$ each contain an embedded open parallelogram $P_j$ with one pair of parallel sides given by two closed geodesics with period $w_j$. Since $\eps$ and $\eps_1$ are small, these periods $w_j$ are nearly vertical. Thus, we can apply certain Dehn twists in the union of the $\ol{P}_3$ to the pairs $\al_j,\al_j^\pr$ to obtain other antler configurations for $(X,\om)$. For our purposes, we need to ensure that the signed areas $\im(\ol{z}_j z_k)$ remain small on the new antler configuration.

For $1 \leq j \leq 2$, let $\bet_j$ be a closed geodesic in $T_j$ with period $w_j$ that is disjoint from all of the slits on $T_j$. For $3 \leq j \leq (k_1 + 3)/2$, let $\bet_j$ be a closed geodesic in $\ol{P}_1 \cup (\ol{P}_3 \cup \cdots \cup \ol{P}_j)$ with period $w_1 + (w_3 + \cdots + w_j)$. The existence of such $\bet_j$ is ensured because the periods $w_1,w_3,\dots,w_j$ are all very close to $i$. The homology class of $\bet_j$ is $[\bet_j] = b_1 + (b_3 + \cdots + b_j)$. Denote
\be
m = \frac{k_1 + 3}{2}, \quad L = \lcm(2,3,\dots,m-1) .
\ee
Applying the Dehn multitwist
\be
T_{\bet_1}^{9(m-2)L} T_{\bet_2}^{10(m-2)L} T_{\bet_3}^{L/2} \cdots T_{\bet_m}^{L/(m-1)} ,
\ee
we obtain another antler configuration with bubbled handles bounded by pairs $\wt{\al}_j,\wt{\al}_j^\pr$, $3 \leq j \leq m$, with periods $\wt{z}_j$ given by
\begin{align*}
\wt{z}_3 &= z_3 + \frac{L}{2}(w_1 + w_3) + \frac{L}{3}(w_1 + w_3 + w_4) + \cdots + \frac{L}{m-1}(w_1 + w_3 + \cdots + w_m)  \\
\wt{z}_4 &= z_4 + \frac{L}{3}(w_1 + w_3 + w_4) + \cdots + \frac{L}{m-1}(w_1 + w_3 + \cdots + w_m) \\
&\vdots \\
\wt{z}_m &= z_m + \frac{L}{m-1}(w_1 + w_3 + \cdots + w_m)
\end{align*}
and with handle parameters $\wt{z}_3,w_3,\dots,\wt{z}_m,w_m$. The associated flat tori $T_1,T_2$ remain the same, and we note that the image of a closed geodesic $\al_j \subset T_j$ with period $z_j$ is a closed geodesic with period
\begin{align*}
\wt{z}_1 &= z_1 + 9(m - 2)L w_1 + \frac{L}{2}(w_1 + w_3) + \cdots + \frac{L}{m-1}(w_1 + w_3 + \cdots + w_m) \\
\wt{z}_2 &= z_2 + 10(m-2)L w_2
\end{align*}
The absolute values $|\wt{z}_j|$ are bounded in terms of $g$, and the periods $\wt{z}_j$ are all close to the real line through $0$ and $1/(10(m-2)) + Li$, so the signed areas $\im(\ol{\wt{z}_j} \wt{z}_k)$ remain small. It follows that there is an element $M \in \GL^+(2,R)$ such that on $M \cdot (X,\om)$, the antler configuration with bubbled handles $\wt{\al}_j,\wt{\al}_j^\pr$ above has period coordinates certifying $M \cdot (X,\om) \in \mc{U}^\pr(k_1,k_2)$. The associated homology classes $\wt{a}_1,\dots,\wt{a}_m \in H_1(X;\Z)$ associated to this antler configuration are given by the same expressions, but with $a_j$ in place of $z_j$ and $b_j$ in place of $w_j$.

Another short computation yields the following lemma.

\begin{lem} \label{lem:homologyspan}
Fix $(X,\om) \in \mc{U}^\pr(k_1,k_2)$, and let $\{a_j,b_j\}_{j=1}^g$ be the symplectic basis for $H_1(X;\Z)$ arising from the given antler configuration. Let $b_{12} = b_1 - b_2$, and let
\begin{align*}
\Lam_1 = \Span_\Z \{a_1,\dots,a_{(k_1 + 3)/2}\}, & \quad \Lam_2 = \Span_\Z \{a_1 + b_{12}, a_2 - b_{12}, a_3, \dots, a_{(k_1 + 3)/2}\} \\
\Lam_3 = \Span_\Z \{\wt{a}_1,\dots,\wt{a}_{(k_1 + 3)/2}\}, & \quad \Lam_4 = \Span_\Z \{\wt{a}_1 + b_{12}, \wt{a}_2 - b_{12}, \wt{a}_3, \dots, \wt{a}_{(k_1 + 3)/2}\}
\end{align*}
Then we have
\begin{align*}
\Lam_1 \cap \Lam_2 &= \Span_\Z \{a_1 + a_2, a_3, \dots, a_{(k_1 + 3)/2}\} \\
\Lam_3 \cap \Lam_4 &= \Span_\Z \{\wt{a}_1 + \wt{a}_2, \wt{a}_3, \dots, \wt{a}_{(k_1 + 3)/2}\} \\
(\Lam_1 + \Lam_2) \cap (\Lam_3 + \Lam_4) &= \Span_\Z \{b_1 - b_2, 10a_1 - 9a_2 - 10a_3\}
\end{align*}
\end{lem}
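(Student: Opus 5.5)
The plan is to treat this as pure linear algebra in $H_1 = H_1(X;\Z) \cong \Z^{2g}$ with the symplectic basis $\{a_j,b_j\}_{j=1}^g$. I would first write every generator of $\Lam_1,\dots,\Lam_4$ as a coordinate vector in this basis, using the explicit formulas for $\wt{a}_j$ coming from the Dehn multitwist. Only the coordinates $a_1,\dots,a_m,b_1,\dots,b_m$ with $m = (k_1+3)/2$ ever appear, so all four lattices sit inside $\Z^{2m}$.

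\textbf{First two identities.} An element of $\Lam_2$ has the form
\[
\lambda_1(a_1+b_{12}) + \lambda_2(a_2-b_{12}) + \sum_{j \geq 3}\lambda_j a_j .
\]
Its $b$-component is $(\lambda_1-\lambda_2)b_{12}$, so it lies in $\Lam_1$ iff $\lambda_1 = \lambda_2$. This gives $\Lam_1\cap\Lam_2 = \Span_\Z\{a_1+a_2,a_3,\dots,a_m\}$. The same argument applies verbatim to $\Lam_3\cap\Lam_4$. It only needs $b_{12} \notin \Lam_3$ (indeed $\Lam_3 \otimes \Q \cap \Q b_{12} = 0$) and that $\{\wt{a}_j\}$ extends to a basis, which holds because $\Lam_3$ is the image of $\Lam_1$ under a symplectic automorphism.

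\textbf{Third identity.} Note that $\Lam_1+\Lam_2 = \Span_\Z\{a_1,\dots,a_m,b_{12}\}$, and likewise $\Lam_3+\Lam_4 = \Span_\Z\{\wt{a}_1,\dots,\wt{a}_m,b_{12}\}$. Modulo $\Z b_{12}$, I would determine which integer combinations $\sum \mu_j \wt{a}_j$ lie in $\Span_\Z\{a_1,\dots,a_m,b_{12}\}$. Expanding, $\sum\mu_j\wt{a}_j = \sum \mu_j a_j + \sum_k \beta_k b_k$, where the $\beta_k$ are explicit linear forms in the $\mu_j$:
\begin{itemize}
\item $\beta_2 = 10(m-2)L\mu_2$;
\item $\beta_1 = 9(m-2)L\mu_1 + \sum_{i=2}^{m-1}\frac{L}{i}(\mu_1+\mu_3+\cdots+\mu_{i+1})$;
\item $\beta_k = \sum_{i\geq k-1}\frac{L}{i}(\mu_1+\mu_3+\cdots+\mu_{i+1})$ for $k \geq 3$.
\end{itemize}
Membership requires $\beta_k = 0$ for $k\ge 3$ and $\beta_1 = -\beta_2$.

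I would solve this triangular system from $k=m$ downward. The conditions $\beta_m = \cdots = \beta_3 = 0$ successively force the partial sums $\mu_1+\mu_3+\cdots+\mu_{i+1}$ to vanish for $i = m-1,\dots,2$. This gives $\mu_4=\cdots=\mu_m=0$ and $\mu_3=-\mu_1$. The last condition then reduces to $9\mu_1 = -10\mu_2$. So the solutions form a rank-one lattice generated by $\mu = (10,-9,-10,0,\dots)$, i.e.\ by the element $10\wt{a}_1 - 9\wt{a}_2 - 10\wt{a}_3$. Computing it in the original basis, the $b$-contributions should cancel (up to a multiple of $b_{12}$) to give $10a_1-9a_2-10a_3$ modulo $b_{12}$. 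Adding back $\Z b_{12}$ yields the claimed span.

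The main obstacle is the bookkeeping in this last step, specifically the $\beta_1+\beta_2$ condition. The coefficients $9(m-2)L$ and $10(m-2)L$ were evidently chosen so that the resulting relation has the clean coefficients $10,-9,-10$, and I would verify this directly. One also has to check carefully that no extra solutions arise modulo $b_{12}$: the intersection is computed inside $\Z^{2m}$, and any correction in $b_1,b_2$ must be an integer multiple of $b_{12}$, which I would confirm by primitivity of the resulting vector.
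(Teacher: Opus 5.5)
Your proposal is correct and is exactly the direct coordinate computation the paper has in mind (the paper only says ``another short computation''). The triangular system forces $\mu_1+\mu_3+\cdots+\mu_j=0$ for $3\le j\le m$, the remaining condition is $9(m-2)L\mu_1=-10(m-2)L\mu_2$, and indeed $10\wt a_1-9\wt a_2-10\wt a_3=10a_1-9a_2-10a_3+90(m-2)L\,b_{12}$. Your closing primitivity check is unnecessary: $\beta_1,\beta_2\in\Z$ with $\beta_1=-\beta_2$ already gives an integer multiple of $b_{12}$, since $\Span_\Z\{a_1,\dots,a_m,b_{12}\}$ is saturated in $H_1$.
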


Now fix $(X,\om) \in \mc{U}^\pr(k_1,k_2)$ and let $z_j,w_j,c$ be the period coordinates arising from the given antler configuration. The nearly horizontal closed geodesic $\al_1,\al_2$ in the associated flat tori $T_1,T_2$, and the nearly horizontal slits $\al_3,\dots,\al_{(k_1+3)/2}$ in the antler configuration have homology classes $a_1,\dots,a_{(k_1+3)/2}$. By applying item (4) in Lemma \ref{lem:dehntwistrecurrence}, there is a holomorphic $1$-form $(X_1,\om_1) \in \mc{U}^\pr(k_1,k_2) \cap L_\cC(p_\om)$ for which the analogous homology classes are $a_1 + b_{12}, a_2 - b_{12}, a_3, \dots, a_{(k_1+3)/2}$. Similarly, by applying a Dehn multitwist, and then applying item (4) in Lemma \ref{lem:dehntwistrecurrence}, we obtain $1$-forms in $\GL^+(2,\R) \cdot L_\cC(p_\om)$ for which the analogous homology classes are $\wt{a}_1,\dots,\wt{a}_{(k_1+3)/2}$ and $\wt{a}_1 + b_{12}, \wt{a}_2 - b_{12}, \wt{a}_3, \dots, \wt{a}_g$, respectively. Denote the $\Z$-spans of these $4$ collections of homology classes by $\Lam_1,\Lam_2,\Lam_3,\Lam_4$. We also have analogous subspaces $\Lam_1^\pr,\Lam_2^\pr,\Lam_3^\pr,\Lam_4^\pr$ arising from the nearly vertical geodesics $\bet_1,\bet_2$ and the nearly vertical slits $\al_{(k_1+5)/2},\dots,\al_g$, and their analogous obtained by applying item (2) of Lemma \ref{lem:dehntwistrecurrence} and applying an analogous Dehn multitwist along nearly horizontal closed geodesic. The intersection $\left(\sum_{j=1}^4 \Lam_j\right) \cap \left(\sum_{j=1}^4 \Lam_j^\pr\right)$ contains
\be
\Span_\Z \{a_1,a_2,b_1 - b_2\} \cap \Span_\Z \{b_1,b_2,a_1 - a_2\} = \{a_1 - a_2, b_1 - b_2\}
\ee
and so has rank at least $2$.

We now define a dense open subset $\mc{U}^{\pr\pr}(k_1,k_2) \subset \mc{U}^\pr(k_1,k_2)$ as follows. A holomorphic $1$-form $(X,\om) \in \mc{U}^\pr(k_1,k_2)$ is in $\mc{U}^{\pr\pr}(k_1,k_2)$ if and only if all of the following subgroups of $p_\om(H_1(X;\Z))$ contain a lattice in $\C$:
\be
p_\om(\Lam_1 \cap \Lam_2), \; p_\om(\Lam_3 \cap \Lam_4), \; p_\om((\Lam_1 + \Lam_2) \cap (\Lam_3 + \Lam_4)),
\ee
\be
p_\om(\Lam_1^\pr \cap \Lam_2^\pr), \; p_\om(\Lam_3^\pr \cap \Lam_4^\pr), \; p_\om((\Lam_1^\pr + \Lam_2^\pr) \cap (\Lam_3^\pr + \Lam_4^\pr)),
\ee
\be
p_\om \left( \left(\sum_{j=1}^4 \Lam_j\right) \cap \left(\sum_{j=1}^4 \Lam_j^\pr\right) \right) .
\ee
Note that since each of the given subgroups of $H_1(X;\Z)$ has rank at least $2$, each lattice condition can be satisfied if a particular pair of periods are nonparallel. For instance, if $p_\om(a_1 + a_2)$ and $p_\om(a_3)$ are nonparallel, then $p_\om(\Lam_1 \cap \Lam_2)$ contains a lattice in $\C$.

We are now ready to apply the lemmas from Section \ref{sec:Kapovich} in order to run a similar bootstrap argument for closures of absolute period leaves in $\cC$.

\begin{lem} \label{lem:leafclosurenbhd}
Fix $(X,\om) \in \mc{U}^{\pr\pr}(k_1,k_2)$. Let $a = \Area(X,\om)$, and let $\Lam = \Lam(\om)$ be the closure of $p_\om(H_1(X;\Z))$ in $\C$. Suppose $\Lam(\om)$ is not discrete. Then the closure in $\cC$ of the absolute period leaf $L_\cC(p_\om)$ contains an open neighborhood of $(X,\om)$ in $\cC_a^\Lam$.
\end{lem}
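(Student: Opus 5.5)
We run the bootstrap from the proof of Theorem \ref{thm:Kapovich} locally, in the period coordinates $(z,w,c)$ coming from antler configurations. Lemma \ref{lem:UAhalfspaces}, Lemma \ref{lem:Uorbitleaf} and Corollary \ref{cor:leafclosureU} replace $\Sp(H_1)$-orbit closures by leaf closures. The target $\cC_a^\Lam$ near $(X,\om)$ is parametrized by the relative period $c$ together with the absolute periods $p_\om$, with $p_\om$ ranging over a neighborhood in $V_a^\Lam$. Since $c$ is already free along the leaf, it suffices to show the following: for every $p'\in V_a^\Lam$ near $p_\om$, the form with periods $(p',c)$ lies in $\ol{L_\cC(p_\om)}$.

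\textbf{Step 1: a nondiscrete triple in the right position.} The first step is to find a triple $a_1,a_2,a_j$ with $3\le j\le (k_1+3)/2$ (or the analogous triple $b_1,b_2,a_j$ for the second antler) whose $p$-values generate a nondiscrete group. This is the analogue of Lemma \ref{lem:nondiscreteLag3}, and it is where $\cU''(k_1,k_2)$ enters. Each of the seven listed subgroups has $p_\om$-image containing a lattice. Hence, if all the triples $(a_1,a_2,a_j)$ coming from the four antler presentations $\Lam_1,\dots,\Lam_4$ had discrete image, then commensurability would propagate. Concretely, $p_\om(\Lam_1\cap\Lam_2)$ being a lattice forces $p_\om(\Lam_1)$ and $p_\om(\Lam_2)$ to be commensurable, and similarly through $(\Lam_1+\Lam_2)\cap(\Lam_3+\Lam_4)$. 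The final condition would then tie the $z$-side sums to the $w$-side sums, making $p_\om(H_1)$ a lattice, which contradicts the nondiscreteness of $\Lam$.

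Here I must check that $\Lam_1+\dots+\Lam_4$ and $\Lam'_1+\dots+\Lam'_4$ together generate $H_1$, or at least a finite-index subgroup. The elements $b_1-b_2$, $a_1$, $a_2$, the $\wt a_j$ and their primed analogues should do this. The forms realizing $\Lam_2,\Lam_3,\Lam_4$ lie on $L_\cC(p_\om)$ or in $\GL^+(2,\R)\cdot L_\cC(p_\om)$ inside $\cU'$. Leaf closures commute with the $\GL^+(2,\R)$-action up to relabeling the target set. So we may pass to one of these forms and assume that $(a_1,a_2,a_3)$, after relabeling, has nondiscrete image.

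\textbf{Step 2: run the bootstrap with Corollary \ref{cor:leafclosureU}.} Next, shrink to $\cU(k_1,k_2,r)$ with $r>3g\eps$, which is possible because $\cU'$ sits well inside it. Apply Lemmas \ref{lem:UAdim1}, \ref{lem:UAdim2type2} and \ref{lem:UAtype1} to the $U_3$-action, in the appropriate case for $\Lam\cong\C$ or $\Lam\cong\R+i\Z$. By Corollary \ref{cor:leafclosureU}, every nearby point of the $U_3$-orbit closure inside $\cU(k_1,k_2,r)$ lies in the leaf closure. The orbit closure is a translate of a subgroup, so it contains points arbitrarily close to $p_\om$, and the new $p'$ can be taken near $p_\om$. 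The lemmas then produce nearby points $p'$ on which a triple has $p'$-dimension $5$, or in the $\R+i\Z$ case the maximal dimension $2$. At that point $(w_1,w_2,w_3)$ can be perturbed freely within $W(z')$ (only the real parts in the $\R+i\Z$ case).

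We then cycle through $j=3,\dots,g$, using $U_j$ and $U'_j$; the modified factor mixes let us also move $z_1,z_2,z_j$ for the second antler. Following the end of Case 1 of the proof of Theorem \ref{thm:Kapovich}, this perturbs all of $z,w$ subject only to fixed area and $\Lam$. Each step stays in a small neighborhood, so the configuration persists. Taking closures of closures, all these forms lie in $\ol{L_\cC(p_\om)}$, which gives an open neighborhood of $(X,\om)$ in $\cC_a^\Lam$.

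\textbf{Main obstacle.} The main obstacle is the issue hidden in the bootstrap lemmas. They swap $a_3$ for $b_3$ and permute the basis, while our geometric control, Lemma \ref{lem:dehntwistrecurrence}, only covers the specific shears and mixes tied to the antler. The fix I would try first is to re-present the perturbed form by a new antler configuration, using Dehn twists and the $\GL^+(2,\R)$ normalization into $\cU'$. The role of a triple is thereby realized by the triple of an actual configuration. The argument then proceeds by genericity, avoiding the countably many bad loci, rather than by literal basis permutations.
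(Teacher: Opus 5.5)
Your outline follows the paper's. Step 1 is how the $\mc{U}^{\pr\pr}(k_1,k_2)$ conditions and Lemma \ref{lem:homologyspan} are used, and Step 2 begins the same way. The gap is that the step you call the main obstacle is the real content of the lemma, and you leave it unresolved. Corollary \ref{cor:leafclosureU} and Lemma \ref{lem:UAdim} put a $d(z)$-dimensional family of perturbations of $(w_1,w_2,w_3)$ into the leaf closure. Lemmas \ref{lem:UAdim1}, \ref{lem:UAtype1} and \ref{lem:UAdim2type2} then improve the invariant by passing to the triple $\Span_\Z\{a_1,a_2,b_3\}$. That triple is not the triple $a_1,a_2,a_3$ of the antler configuration you are working with, and you exhibit no configuration on the leaf that realizes it. So Corollary \ref{cor:leafclosureU} cannot be reapplied, and the bootstrap stalls after one step. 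Your sentence about ``re-presenting by a new antler configuration using Dehn twists and $\GL^+(2,\R)$ normalization'' leaves three things open. It does not say which configuration, what its $A$-triple is, or why that triple's periods move along the orbit-closure direction so that a generic point avoids the countably many bad loci of Lemma \ref{lem:dzcases}. The multitwist-plus-normalization construction you point to is used in the paper only to realize $\Lam_3,\Lam_4$ in Step 1, not in the bootstrap.

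The paper's fix is concrete and stays on the leaf. Set $b_{12}=b_1-b_2$. In place of $\{a_1,a_2,b_3\}$ it uses $A^\pr=\Span_\Z\{a_1+b_{12},\,a_2-b_{12},\,a_3\}$. At a nearby point with coordinates $(z,w^\pr)$ in the $U_3$-orbit closure, the periods of $A^\pr$ are $z_1+(w_1^\pr-w_2^\pr)$, $z_2-(w_1^\pr-w_2^\pr)$ and $z_3$. This converts the freedom in the $B$-periods $w_1^\pr,w_2^\pr$ into freedom in $A$-periods. The genericity arguments of the three bootstrap lemmas then run with $A^\pr$ in place of $\Span_\Z\{a_1,a_2,b_3\}$. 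Next the paper invokes item (4) of Lemma \ref{lem:dehntwistrecurrence}, the slit Dehn twist through $T_1\cup T_2$ in the $w$-directions. This yields a form $(X_1,\om_1)$ on the same leaf whose antler configuration has exactly these classes as its first three $A$-classes. So $\Span_\Z\{a_1,a_2,a_3\}$ itself now has improved $p_{\om_1}$-dimension and type, and Corollary \ref{cor:leafclosureU} with $U_3$ applies again. This mechanism is what your proposal is missing; the same device also drives the iteration over $j$ at the end.
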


\begin{proof}
We are assuming $p_\om(H_1(X;\Z))$ is not discrete. Let $\{a_j,b_j\}_{j=1}^g$ be the symplectic basis of $H_1(X;\Z)$ arising from given antler configuration on $(X,\om)$. By Lemma \ref{lem:homologyspan} and the discussion preceding Lemma \ref{lem:leafclosurenbhd}, we may assume that $p_\om(\Lam_1)$ is not discrete, where $\Lam_1 = \Span_\Z \{a_1,\dots,a_{(k_1+3)/2}\}$. By definition of $\mc{U}(k_1,k_2)$, the periods $p_\om(a_1),\dots,p_\om(a_{(k_1+3)/2})$ are nonzero and pairwise nonparallel. Since the subgroups $p_\om(\Span_\Z\{a_1,a_2,a_j\})$, $3 \leq j \leq (k_1+3)/2$, all contain the lattice $p_\om(\Span_\Z\{a_1,a_2\})$, and $p_\om(\Lam_1)$ is not discrete, at least one of the $p_\om(\Span_\Z\{a_1,a_2,a_j\})$ must not be discrete. We may assume $p_\om(\Span_\Z\{a_1,a_2,a_3\})$ is not discrete. Write
\be
z = (z_1,z_2,z_3) = (p_\om(a_1),p_\om(a_2),p_\om(a_3)) .
\ee

By Corollary \ref{cor:leafclosureU} and Lemma \ref{lem:UAdim}, there is a closed subgroup $O(z)$ of $\C^{2g}$ of real dimension $d(z)$ (defined in Section \ref{sec:Kapovich}) such that nearby $1$-forms with local period coordinate vectors of the form $(z_1,\dots,w_g) + v$ with $v \in O(z)$ lie in the closure of $L_\cC(p_\om)$. Here, the only components of $O(z)$ that can be nonzero are the components for $w_1,w_2,w_3$.

Suppose $d(z) = 1$. In other words, $A = \Span_\Z \{a_1,a_2,a_3\}$ has $p_\om$-dimension $1$ (defined in Section \ref{sec:Kapovich}). Following the argument in Lemma \ref{lem:UAdim1}, there is a nearby $(X^\pr,\om^\pr)$ in the closure of $L_\cC(p_\om)$ with local period coordinate vector of the form
\be
(z_1,\dots,z_g,w_1^\pr,w_2^\pr,w_3^\pr,w_4,\dots,w_g) \in (z_1,\dots,w_g) + O(z)
\ee
such that $A^\pr = \Span_\Z \{a_1 + (b_1 - b_2), a_2 - (b_1 - b_2), a_3\}$ has $p_{\om^\pr}$-dimension at least $2$, and $p_{\om^\pr}$-type $2$ in the case of equality. By item (4) in Lemma \ref{lem:dehntwistrecurrence}, there is a $1$-form $(X_1,\om_1) \in L_\cC(p_{\om^\pr})$ with local period coordinate vector
\be
(z_1 + (w_1^\pr - w_2^\pr), z_2 - (w_1^\pr - w_2^\pr), z_3,\dots,z_g,w_1^\pr,w_2^\pr,w_3^\pr,w_4,\dots,w_g) .
\ee
Now $A$ has $p_{\om_1}$-dimension at least $2$, and $p_{\om_1}$-type $2$ in the case of equality.

Suppose $A$ has $p_{\om_1}$-type $1$. Note that in this case, $\Lam(\om) = \C$. Following the argument in Lemma \ref{lem:UAtype1}, there is a nearby $(X_1^\pr,\om_1^\pr)$ in the closure of $L_\cC(p_{\om_1})$ such that $A^\pr$ has $p_{\om_1^\pr}$-dimension $5$. Then by item (4) in Lemma \ref{lem:dehntwistrecurrence}, there is $(X_2,\om_2) \in L_\cC(p_{\om_1^\pr})$ such that $A$ has $p_{\om_2}$-dimension $5$. This means we can perturb $p_{\om_2}(b_1),p_{\om_2}(b_2),p_{\om_2}(b_3)$ by any small amount that preserves $\sum_{j=1}^3 \im(\ol{p_{\om_2}(a_j)}p_{\om_2}(b_j))$ while staying in the closure of $L_\cC(p_{\om_2})$. Following the iterative argument in the proof of Theorem \ref{thm:Kapovich}, we conclude that the closure of $L_\cC(p_\om)$ contains an open neighborhood of $(X,\om)$ in $\cC_a$.

Next, suppose $A$ has $p_{\om_1}$-type $2$ and $\Lam(\om) = \C$. Following the argument in Lemma \ref{lem:UAdim2type2}, there is a nearby $(X_1^\pr,\om_1^\pr)$ in the closure of $L_\cC(p_{\om_1})$ such that $A^\pr$ has $p_{\om_1}^\pr$-type $1$. Then by the previous paragraph, we again conclude that the closure of $L_\cC(p_\om)$ contains an open neighborhood of $(X,\om)$ in $\cC_a$.

Lastly, suppose $A$ has $p_{\om_1}$-type $2$ and $\Lam(\om) \cong \R + i\Z$. We may assume $\Lam(\om) = \R + i\Z$. This means we can perturb the real parts of $p_{\om_1}(b_1),p_{\om_1}(b_2),p_{\om_1}(b_3)$ by any small amount that preserves $\sum_{j=1}^3 \im(\ol{p_{\om_1}(a_j)}p_{\om_1}(b_j))$ while staying in the closure of $L_\cC(p_{\om_1})$. Following the iterative argument in the proof of Theorem \ref{thm:Kapovich} again, we conclude that closure of $L_\cC(p_\om)$ contains an open neighborhood of $(X,\om)$ in $\C_a^{\R + i\Z}$.
\end{proof}

\begin{proof} (of Theorem \ref{thm:leafclosures})
Following \cite{Win:ergodic}, Theorem \ref{thm:leafclosures} can be reduced to the case of $2$ distinct zeros by splitting zeros. Thus, it is enough to consider the cases where $\cC$ a either a connected stratum with $2$ distinct zeros, or the nonhyperelliptic component of $\Om\cM_g(g-1,g-1)$ with $g-1 \geq 4$ odd. Letting $k_1 \geq k_2$ be the two zero orders, we have $\mc{U}^{\pr\pr}(k_1,k_2) \subset \cC$.

Fix $(X,\om) \in \mc{U}^{\pr\pr}(k_1,k_2)$ with area $a > 0$. By Lemma \ref{lem:leafclosurenbhd}, if $\Lam(\om)$ is not discrete, then the closure of the absolute period leaf through $(X,\om)$ contains a nonempty open neighborhood in $\cC_a^{\Lam(\om)}$. By Theorems 1.2 and 1.4 in \cite{Win:ergodic}, there are absolute period leaves that are dense in $\cC_a^{\Lam(\om)}$, and thus the absolute period leaf through $(X,\om)$ must be dense in $\cC_a^{\Lam(\om)}$ since it contains such a dense leaf in its closure. It follows that for any $M \in \GL^+(2,\R)$ with determinant $d > 0$, the absolute period leaf through $M \cdot (X,\om)$ is dense in $\cC_{da}^{M \cdot \Lam(\om)}$. Thus, $\mc{U} = \GL^+(2,\R) \cdot \mc{U}^{\pr\pr}(k_1,k_2)$ is a nonempty, open, $\GL^+(2,\R)$-invariant subset of $\cC$ such that for any $(X,\om) \in \mc{U}$ with $\Lam(\om)$, the closure of the absolute period leaf through $(X,\om)$ is dense in $\cC_{\Area(X,\om)}^{\Lam(\om)}$. Moreover, if $\Lam(\om)$ is discrete, then $L_\cC(p_\om)$ is closed and is a connected component $\cC_a^{\Lam(\om)}$. Finally, by \cite{Fil:splitting}, the open set $\mc{U}$ has the form $\mc{U} = \mc{C} \sm \mc{V}$ for some invariant subvariety $\mc{V}$ of positive codimension.
\end{proof}


\bibliographystyle{math}
\bibliography{my.bib}

{\small
\noindent
Email: kgwinsor@gmail.com

\noindent
Department of Mathematics, Stony Brook University, Stony Brook, New York, USA
}

\end{document}